\newtheorem{thm}{Theorem}[section]
\newtheorem{lem}[thm]{Lemma}
\newtheorem{cor}[thm]{Corollary}
\newtheorem{prop}[thm]{Proposition}
\newtheorem{Exm}[thm]{Example}
\newtheorem{rem}[thm]{Remark}
\def\be{\begin{equation}}
\def\ee{\end{equation}}
\def\bea{\begin{eqnarray}}
\def\eea{\end{eqnarray}}
\begin{document}

\begin{center}
{\bf \large Group Inverses of Weighted Trees}\\

\vspace{3cm}
{\bf Raju Nandi}\\
\vspace{.5cm}
Discipline of Mathematics\\
Indian Institute of Technology Gandhinagar\\
Gandhinagar 382355\\
India \\
\end{center}
  
\vspace{3cm}  
\noindent{\bf Abstract :} \\
Let $(G,w)$ be an undirected weighted graph. The group inverse of $(G,w)$ is the weighted graph with the adjacency matrix $A^{\#}$, where $A$ is the adjacency matrix of $(G,w)$. We study the group inverse of singular weighted trees. It is shown that if $(T,w)$ is a singular weighted tree, then $T^{\#}$ is again a tree, if and only if $T$ is a star tree, which in turn, holds if and only if $T^{\#}$ is graph isomorphic to $T$. A new class $\mathbb{T}$ of weighted trees, is introduced and studied here. It is shown that the group inverse of the adjacency matrix of a positively weighted tree in $\mathbb{T}$, is signature similar to a non-negative matrix.

\vskip.25in
\noindent\textit{Keywords:} 
Weighted graph; adjacency matrix; group inverse of graph; maximum matching; alternating path; star. 

%\vskip.05in
\noindent\textit{AMS Subject Classifications:} 05C22, 05C50, 15A09.

\newpage
\section{Introduction}
Let $A=(a_{ij})$ be an $n\times n$ real symmetric matrix.  Then the {\it undirected weighted graph} of $A$, denoted by $\mathcal{G}(A)$, is the pair $(G,w)$, where $G$ is the graph whose vertices are denoted by $\{v_1,v_2,\ldots,v_n\}$ such that $v_iv_j$ is an edge of $G$ if and only if $a_{ij}\neq 0$ (loops are allowed) and $w$ is the weighting function that assigns to each edge $v_iv_j$ in $G$, the weight $w(v_iv_j)=a_{ij}$. One can also turn this around. Let $(G,w)$ be an undirected weighted graph on $n$ vertices $\{v_1,v_2,\ldots,v_n\}$. Define the {\it adjacency} matrix of $G$ to be the $n\times n$ matrix $A$ defined by $$a_{ij}=\left\{\begin{array}{cl} w(v_iv_j)&\mbox{if }v_iv_j\mbox{ is an edge of }G,\\0&\mbox{otherwise.}\end{array}\right.$$

Note that, for an unweighted undirected graph all weights are assumed to be 1. Let $(G,w)$ be a undirected loop-free weighted graph with adjacency matrix $A$. If $A$ is invertible, then we say that the (weighted) graph $(G^{-1},w^{-1})$ is the {\it inverse} of $(G,w)$. Here, we presume that $G$ and $G^{-1}$ have the same vertex set. Also, $v_iv_j$ is an edge in $G^{-1}$ if, and only if, the $ij$-th entry of $A^{-1}$ is nonzero and the weight $w^{-1}(v_iv_j)$ of the edge $v_iv_j$ equals that entry. It is well known that the inverse defined in this manner is unique up to graph isomorphisms which also preserve the edge weights. In the case of singular matrices, one defines the group inverse of the graph $(G,w)$ in an entirely similar manner, if the group inverse $A^{\#}$ exists. Let us recall that the group inverse of a matrix $A\in \mathbb{R}^{n \times n}$ is the unique matrix $X \in \mathbb{R}^{n \times n}$, if it exists, that satisfies the equations $AXA=A, XAX=X$ and $AX=XA$. Also, for a real rectangular matrix $A$, the Moore-Penrose inverse of $A$, is the unique matrix $A^{\dagger}$ that satisfies the equations $AA^{\dagger}A=A,~A^{\dagger}AA^{\dagger}=A^{\dagger},~(AA^{\dagger})^T=AA^{\dagger}$ and $(A^{\dagger}A)^T=A^{\dagger}A$. In particular, one may show that $A^{\dag}=A^T(AA^T)^{\dag}.$ It is also known that the Moore-Penrose inverse coincides with the group inverse for those square matrices that satisfy the condition $R(A)=R(A^T).$ Here $R(X)$ denotes the range space of the matrix $X$. We refer the reader to \cite{bg} for more details on these notions of generalized inverses and Moore-Penrose inverses.

Thus, the {\it group inverse} of $G$ is the graph  denoted by $(G^\#,w^\#)$ where, as earlier, the vertex sets of $G$ and $G^{\#}$ are taken to be the same, while the weight $w^{\#}$ of an edge $v_iv_j$ in $G^{\#}$ is defined as the $ij$-th entry of $A^{\#}$. Also, as in the case of nonsingular $A$, the group inverse graph $G^{\#}$ is uniquely determined up to isomorphism preserving the edge weights.

Before embarking on proving our results, let us make the following observation, to place this work, in a proper perspective. Irrespective of whether the graph is weighted or otherwise since the adjacency matrix of an undirected graph is always symmetric, (and the group inverse of a symmetric matrix always exists,)  the group inverse of an undirected graph always exists. For directed graphs, very little is known. For instance, we state one result \cite[Proposition 1.1]{cov}: Given a square matrix $A$, let $D(A)$ be a directed tree. Then $A^{\#}$ exists if and only if $D(A)$ satisfies a certain condition involving maximal matchings. 

Before proceeding further, let us give an overview of why the group inverse of a graph may be important to study. Some further motivation is included a little later. In Chemistry, the molecular graph of a molecule, is obtained by taking vertices corresponding to the carbon atoms of the hydrocarbon system, wherein two vertices (in the graph) are adjacent if and only if there is a bond between the corresponding carbon atoms (see \cite{gg,gp}). Various energies of a molecule like HOMO energy, LUMO energy, total $\pi$-electron energy are directly related to the eigenvalues of its molecular graph. The smallest positive eigenvalue is an important parameter in Quantum Chemistry. Its magnitude is expected to be correlated with the amount of energy needed to remove an electron from the hydrocarbon molecule (see \cite{gr}). It is known that calculating the smallest positive eigenvalue is a complicated numerical task (see \cite{ranibarik} and the references cited therein). Turning our attention to the problem of determining the smallest eigenvalue, observe that when a molecular graph $G$ is non-singular, one approach to estimate bounds for the smallest positive eigenvalue of $G$, is to estimate bounds for the largest positive eigenvalue of $G^{-1}$. For the case of singular graphs, which have group inverses, it is well known that the set of all the non-zero eigenvalues of $G^{\#}$ is the set of all reciprocals of the non-zero eigenvalues of $G$. Thus, to obtain estimates for the smallest positive eigenvalue of $G$, one may consider estimates for the largest positive eigenvalue of $G^{\#}$.

Inverses of weighted graphs as defined above were studied in \cite{ghorbani,pp,sona,dybd}. A weighted graph is said to be a {\it positively weighted} graph if we assign a positive weight on each edge. A positively weighted graph with adjacency matrix $A$ is said to be {\it positively invertible} if $A^{-1}$ is diagonally similar to a non-negative matrix. Positively invertible graphs with integral edge weights have been studied in \cite{ak,sms,mn}. In a recent work, the authors of \cite{pavsi} derived a formula for the entries of the group inverse of the adjacency matrix of an undirected weighted tree and presented a graphical description.

In Section 2, we prove some of the properties of the group inverses of graphs. We prove that if $T$ is a singular weighted tree, then $T^{\#}$ is again a tree if and only if $T$ is a star tree, which in turn, holds if and only if $T^{\#}$ is graph isomorphic to $T$ that is the main result of this article. Then, we give the information of the group inverse of singular weighted paths. In Section 3, we introduce a new class of weighted trees $\mathbb{T}$ and show that this class is positively group invertible, and proved one Perron-Frobenius type result.

\begin{rem}
Henceforth for brevity, we will omit the symbol for the weight function in the exposition. We will write a weighted graph $(G,w)$ by simply $G$.
\end{rem}

\section{Properties of group inverses of weighted graphs}
At the outset, let us give a summary of the results of this section. 

Let us recall the following \cite{bapat}. Let $T$ be an (unweighted) undirected nonsingular tree (meaning that the adjacency matrix of $T$ is nonsingular). Let $A$ be the adjacency matrix of $T$. Then there exists a signature matrix $F$ such that $FA^{-1}F$ is the adjacency matrix of an (unweighted) undirected graph. Note that, this implies that the entries of $FA^{-1}F$ are either zero or one, in particular. The inverse of the nonsingular tree $T$, denoted by $T^{-1}$ is defined as the graph whose adjacency matrix is given by $FA^{-1}F$. Then $T^{-1}$ is connected \cite[Lemma 2.7]{sms} and it is bipartite \cite[Lemma 2.10]{sms}. First, we obtain a similar result for the group inverse of a singular connected weighted graph. This is presented in Proposition \ref{connected}. In Theorem \ref{fourconditions}, we proved that if $T$ is a singular weighted tree, then $T^{\#}$ is again a tree if and only if $T$ is a star tree, which in turn, holds if and only if $T^{\#}$ is graph isomorphic to $T$. Finally, we give information about the group inverse of singular paths in Proposition \ref{oddpath}.

\begin{prop}\label{connected}
Let $G$ be weighted. \\
(a) If $G$ is connected, then $G^{\#}$ is connected.\\
(b) If $G$ is bipartite, then $G^{\#}$ is bipartite.
\end{prop}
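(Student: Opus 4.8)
The plan is to translate both graph-theoretic properties into structural statements about the symmetric matrix $A$ and its group inverse, and to lean on two standard facts: that $A^{\#}$ is a polynomial in $A$ (see \cite{bg}), and that the group inverse is involutive, $(A^{\#})^{\#}=A$. The involution is immediate, since $A$ satisfies the three defining equations $A^{\#}XA^{\#}=A^{\#}$, $XA^{\#}X=X$, $A^{\#}X=XA^{\#}$ for the group inverse of $A^{\#}$ (these are exactly the defining relations for $A^{\#}$ read with the roles of $A$ and $A^{\#}$ swapped), so by uniqueness $A=(A^{\#})^{\#}$. Applying the polynomial fact to $A^{\#}$ then shows that $A=f(A^{\#})$ for some polynomial $f$. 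Since $A$ is symmetric so is $A^{\#}$, and I will use that connectedness corresponds to irreducibility and bipartiteness to the off-diagonal block form of the (symmetric) adjacency matrix.

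For part (a) I would argue by contraposition. Suppose $G^{\#}$ is disconnected, so the vertex set splits into a nonempty set $S$ and its nonempty complement with no edge of $G^{\#}$ between them; after a permutation this means $A^{\#}$ is block diagonal, $A^{\#}=\mathrm{diag}(P,Q)$. Every polynomial of a block diagonal matrix is block diagonal with respect to the same partition, and since $A=f(A^{\#})$, the matrix $A$ is block diagonal for this partition as well. Thus $G$ has no edge joining $S$ to its complement, so $G$ is disconnected, which is the contrapositive of the claim.

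For part (b) the key device is a signature symmetry. Writing $A$ in its bipartite block form $\begin{pmatrix} 0 & B \\ B^{T} & 0 \end{pmatrix}$ and taking the corresponding signature matrix $D=\mathrm{diag}(I,-I)$, one checks $DAD=-A$. Because the group inverse commutes with similarity and with sign change, $DA^{\#}D=(DAD)^{\#}=(-A)^{\#}=-A^{\#}$, where $(-A)^{\#}=-A^{\#}$ is a one-line verification from the defining equations. Writing $A^{\#}=\begin{pmatrix} X & Y \\ Y^{T} & Z \end{pmatrix}$ and computing $DA^{\#}D=\begin{pmatrix} X & -Y \\ -Y^{T} & Z \end{pmatrix}$ and comparing with $-A^{\#}$ forces $X=-X$ and $Z=-Z$, so $X=Z=0$ and $A^{\#}=\begin{pmatrix} 0 & Y \\ Y^{T} & 0 \end{pmatrix}$. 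Hence $G^{\#}$ has no edges within either part of the bipartition and is bipartite.

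I do not expect a genuine obstacle here: the supporting facts (that $(-A)^{\#}=-A^{\#}$, that similarity by $D$ passes through the group inverse, and that $A^{\#}$ is a polynomial in $A$) are each short consequences of the defining equations or standard references. The only point needing care is to ensure that the permutation realizing disconnectedness in (a), and the bipartition block form in (b), are applied consistently to both $A$ and $A^{\#}$ — which is automatic because $G$ and $G^{\#}$ are taken on the same vertex set.
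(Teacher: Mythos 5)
Your argument is correct in both parts. For (a) you are doing essentially what the paper does: the paper assumes $G^{\#}$ is disconnected, writes $A^{\#}=A_1\oplus\cdots\oplus A_k$, and uses the involution $(A^{\#})^{\#}=A$ together with the fact that the group inverse of a direct sum is the direct sum of the group inverses to conclude that $A$ itself would be a direct sum, contradicting connectedness. You reach the same conclusion by writing $A=f(A^{\#})$ for a polynomial $f$ and observing that polynomials preserve block-diagonal structure; the two routes are interchangeable, both resting on the involution. For (b) you take a genuinely different path. The paper simply cites an explicit block formula from \cite[Theorem 2.2]{cov}, namely that for $A=\left(\begin{smallmatrix}0 & C\\ C^{T} & 0\end{smallmatrix}\right)$ one has $A^{\#}=\left(\begin{smallmatrix}0 & (CC^{T})^{\#}C\\ C^{T}(CC^{T})^{\#} & 0\end{smallmatrix}\right)$, from which bipartiteness of $G^{\#}$ is immediate. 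Your signature-similarity argument --- $DAD=-A$ with $D=\mathrm{diag}(I,-I)$, hence $DA^{\#}D=(DAD)^{\#}=(-A)^{\#}=-A^{\#}$, forcing the diagonal blocks of $A^{\#}$ to vanish --- is self-contained, needs only the elementary facts that the group inverse commutes with similarity and with negation, and avoids the external citation. What it gives up is the explicit formula for the nonzero block, which the paper's reference provides for free; what it buys is a short, verifiable proof from the defining equations alone. Both are sound.
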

\begin{proof}
$(a)$ Let $G$ be connected and let  $A$ be the adjacency matrix of $G$. If $G^{\#}$ is not connected, then the adjacency matrix of $G^{\#}$ (assuming that it has $k$ components) will be a direct sum given by 
$A^{\#}=A_1 \oplus A_2 \oplus \cdots \oplus A_k$. This would then imply that $A=(A^{\#})^{\#}={A_1}^{\#} \oplus {A_2}^{\#} \oplus \cdots {A_k}^{\#}$, a contradiction. So, $G^{\#}$ is connected.\\
$(b)$ Since $G$ is a bipartite graph, the adjacency matrix of $G$ is given by
$$A=\begin{pmatrix}
0 & C \\
C^T & 0 \\
\end{pmatrix},$$
where $C\in \mathbb{R}^{k\times (n-k)}$ and $k\leq \frac{n}{2}$. Then by \cite[Theorem 2.2]{cov},
$$A^{\#}=\begin{pmatrix}
0 & (CC^T)^{\#}C \\
C^T(CC^T)^{\#} & \ \ 0 \\
\end{pmatrix}.$$
So, $G^{\#}$ is a bipartite graph.
\end{proof}

Next, we prove some results that will be required in subsequent discussions.

\begin{lem}\label{K1n}
Let $K_{1,n}$ be a weighted star on $\{v_1,v_2,\ldots, v_{n+1}\}$ with center $v_{n+1}$, where $w_i$ denotes the weight on the edge $v_iv_{n+1}$, for all $i=1,2,\ldots, n$. Let $p=\sum _{i=1}^{n}w_i^2$. If $A$ is the adjacency matrix of $K_{1,n}$, then $A^{\#}=\frac{1}{p}A$.
\end{lem}
\begin{proof}
The adjacency matrix $A \in \mathbb{R}^{(n+1) \times (n+1)}$ is given by $A=\begin{pmatrix}
0 & x \\
x^T & 0 \\
\end{pmatrix}$, where $x=(w_1,w_2,\ldots ,w_n)^T\in \mathbb{R}^n$.
Set $X=\frac{1}{p}A$. Since $X$ is multiple of $A$, $AX=XA$. Note that $x^Tx=p$ and so $A^3=pA$. Thus $XAX=\frac{1}{p^2}A^3=X$ and $AXA=\frac{1}{p}A^3=A$, showing that $X=A^{\#}$.
\end{proof}
Recall that an isomorphism of graphs $G$ and $H$ denoted by $G\cong H$ is a bijection between the vertex sets of $G$ and $H$, $f:V(G)\rightarrow V(H)$ such that any two vertices $u$ and $v$ of $G$ are adjacent in $G$ if and only if $f(u)$ and $f(v)$ are adjacent in $H$. For two weighted graphs $\overline{G}$ and $\overline{H}$, they are said to be isomorphic to each other if their underlying graphs are isomorphic to each other.

\begin{cor}\label{star}
Let $T$ be a weighted star. Then $T^{\#} \cong T$.
\end{cor}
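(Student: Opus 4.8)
The plan is to prove $T^{\#} \cong T$ for a weighted star $T$ by directly invoking the computation from Lemma \ref{K1n}. A weighted star on $n+1$ vertices is, up to relabeling the vertices, precisely $K_{1,n}$ with some center $v_{n+1}$ and edge weights $w_1, \ldots, w_n$; so I may assume $T = K_{1,n}$ with adjacency matrix $A$ as in the lemma. The lemma gives $A^{\#} = \frac{1}{p}A$ with $p = \sum_{i=1}^n w_i^2 > 0$.

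First I would observe that the adjacency matrix of $T^{\#}$ is exactly $A^{\#} = \frac{1}{p}A$. The point is then entirely combinatorial: two weighted graphs are isomorphic when their underlying (unweighted) graphs are isomorphic, and the underlying graph of $T^{\#}$ is determined by the nonzero pattern of $A^{\#}$. Since $A^{\#} = \frac{1}{p}A$ is a nonzero scalar multiple of $A$, it has exactly the same zero/nonzero pattern as $A$. Hence $v_iv_j$ is an edge of $T^{\#}$ if and only if it is an edge of $T$, so the underlying graphs of $T^{\#}$ and $T$ coincide on the common vertex set. Taking $f$ to be the identity map on the vertex set furnishes the required graph isomorphism $T^{\#} \cong T$.

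I do not anticipate any real obstacle here; the corollary is essentially a one-line consequence of Lemma \ref{K1n} together with the definition of isomorphism of weighted graphs (which, as recalled just before the statement, depends only on the underlying unweighted graphs). The only point that merits a word of care is reducing an arbitrary weighted star to the labeled form $K_{1,n}$ of the lemma: one must note that every star has a well-defined center (the unique vertex of degree $n$, assuming $n \geq 2$; and the trivial cases $n \le 1$ are immediate), and that relabeling the leaves does not affect the conclusion. With that reduction in place, the identity $A^{\#} = \frac{1}{p}A$ closes the argument.
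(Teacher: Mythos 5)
Your proposal is correct and follows exactly the paper's own argument: apply Lemma \ref{K1n} to get $A^{\#}=\frac{1}{p}A$, note that a nonzero scalar multiple preserves the zero/nonzero pattern, and conclude $T^{\#}\cong T$ via the identity map on vertices. The extra care you take about relabeling and the trivial cases is fine but not needed beyond what the paper records.
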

\begin{proof}
Let $A$ be the adjacency matrix of $T$, then $A^{\#}$ is the adjacency matrix of $T^{\#}$. By Lemma \ref{K1n}, $A^{\#}$ is constant multiple of $A$. Therefore, $T^{\#} \cong T$.
\end{proof}

A {\it matching} in a graph is a set of edges in which no two of them have a common vertex. If this set of edges has maximum cardinality, then the matching is a {\it maximum matching}. Let $M$ be a maximum matching of a graph $G$. A path is said to be {\it alternating path} with respect to $M$ if its edges are alternatively in $M$ and $M^c$ with the first edge and the last edge being in $M$.

Let $\{v_1,v_2,\ldots , v_l\} \subseteq V(G)$. Then $G\backslash \{v_1,v_2,\ldots , v_l\}$ is the subgraph obtained from $G$ by deleting the vertices $\{v_1,v_2,\ldots , v_l\}$ together with their incident edges.

Now, we will use the following notation from \cite{pavsi}. Let $T$ be a weighted tree. For an unordered pair of vertices $v_i, v_j$ of distinct vertices of $T$ let $\mathcal{M}(v_i, v_j)$ denote the set of all maximum matchings $M$ of $T$ with the property that the $v_iv_j$-path is alternating with respect to $M$. A pair of vertices $v_i,v_j$ for which $\mathcal{M}(v_i,v_j)\neq \phi $ will be called {\it maximally matchable}.

Further, for any maximally matchable pair of vertices $v_i,v_j$ and a maximum matching $M \in \mathcal{M}(v_i, v_j)$ let $\alpha _{\overline{v_i,v_j}}(M)$ denote the product of all the weights $w(e)$, ranging over all edges $e$ of $M$ that are not contained in the unique $v_iv_j$-path $P$ in $T$. (a product over an empty set is considered to be equal to 1.) Also, for the same pair of vertices $v_i, v_j$ let $\alpha (v_i, v_j)$ be the product of all the values of $w(e)$ taken over all the edges $e$ in the path $P$, and multiplied by +1 or $-1$ depending on whether the distance between $v_i$ and $v_j$ is congruent to +1 or $-1$ mod 4; if $v_i, v_j$ are not maximally matchable set $\alpha (u_i,v_j) = 0$. For any maximally matchable pair of vertices $v_i, v_j$ of $T$ let
$$\mu _T(v_i,v_j)=\alpha (v_i,v_j) \sum _{M\in \mathcal{M} (v_i,v_j)} (\alpha _{\overline{v_i,v_j} }(M))^2.$$
It follows that $\mu _T (v_i, v_j) = 0$ if $v_i, v_j$ is not a maximally matchable pair (which includes the case $v_i = v_j$). Finally, letting $\mathcal{M}$ be the set of all maximum matchings in $T$, we let $\alpha (M)$ be the product of the weights $w(e)$ taken over all edges $e$ of $M$, for every $M \in \mathcal{M}$. Finally, set 
$$m(T)=\sum _{M\in \mathcal{M}} (\alpha (M))^2.$$

In this terminology, a formula for the group inverse of the adjacency matrix of a weighted tree, proved in \cite{pavsi}, is recalled next.

\begin{thm}\cite[Theorem 1]{pavsi}\label{pi}
Let $T$ be a weighted tree with vertex set $V$. Then, its {\it group inverse} $T^{\#}$ has two distinct vertices $v_i, v_j \in V$ joined by an edge $e$ if and only if $v_i, v_j$ is a maximally matchable pair in $T$, where the weight of $e$ is given by
$$w^{\#} (e)=w^{\#} (v_iv_j)=\frac{\mu _T(v_i,v_j)}{m(T)}.$$
\end{thm}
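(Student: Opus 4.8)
The plan is to compute $A^{\#}$ by combinatorial minor expansions and match the result against the stated formula. Since $A$ is symmetric we have $R(A)=R(A^{T})$, so by the remark recalled in the introduction $A^{\#}=A^{\dagger}$; it therefore suffices to determine the entries of the Moore--Penrose inverse through a determinantal representation (as in \cite{bg}). The normalization I would aim for is the one whose denominator is the sum of the order-$r$ principal minors, where $r=\operatorname{rank}(A)$, since this is the natural analogue of $\det A$ in the nonsingular cofactor formula. The first ingredient is the permutation expansion of the determinant specialized to a forest: because a tree is acyclic, the only permutations contributing a nonzero term to any minor of $A$ are those whose nontrivial cycles are transpositions coming from edges, together with at most one long cycle that is forced to trace a path (there are no cycles of length $\ge 3$ in $T$ to supply longer factors). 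This immediately yields that $r=2\nu(T)$, that the characteristic polynomial of $A$ is the matching polynomial $\sum_{j}(-1)^{j}m_{j}\,x^{n-2j}$ with $m_{j}=\sum_{j\text{-matchings }M}\prod_{e\in M}w(e)^{2}$, and hence that the sum of order-$r$ principal minors equals $\sum_{|S|=r}\det A[S]=(-1)^{\nu}m(T)$.

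Next I would evaluate the two families of maximal ($r\times r$) minors that feed the formula. For a principal minor $\det A[S]$ the acyclicity argument shows the only surviving term corresponds to a perfect matching of the induced forest $T[S]$; since a forest has at most one perfect matching (two would differ by an even cycle), $\det A[S]=(-1)^{\nu}\alpha(M_{S})^{2}$ when $T[S]$ is perfectly matchable and $0$ otherwise. Thus the nonzero principal minors are exactly the summands of $(-1)^{\nu}m(T)$, which is the denominator. For the cofactor-type (bordered) minors that build the numerator of the $(i,j)$ entry, the same expansion forces the long cycle to trace the unique $v_{i}$--$v_{j}$ path $P$ in $T$, while the remaining support must be a perfect matching of the forest $T\backslash P$. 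Evaluating the unique surviving permutation gives the product of the path weights times $\prod_{e}w(e)^{2}$ over the off-path matching edges, carrying a sign determined by the length of the long cycle, i.e. by the distance between $v_{i}$ and $v_{j}$. This is precisely the data packaged into $\alpha(v_i,v_j)$ and the off-path factors $\alpha_{\overline{v_i,v_j}}(M)^{2}$, and summing over all admissible completions of the complementary forest produces $\sum_{M\in\mathcal{M}(v_i,v_j)}\alpha_{\overline{v_i,v_j}}(M)^{2}$.

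Finally I would assemble these evaluations. The numerator collapses to $(-1)^{\nu}\,\alpha(v_i,v_j)\sum_{M\in\mathcal{M}(v_i,v_j)}\alpha_{\overline{v_i,v_j}}(M)^{2}=(-1)^{\nu}\mu_{T}(v_i,v_j)$, the denominator to $(-1)^{\nu}m(T)$, and the shared factor $(-1)^{\nu}$ cancels, giving $w^{\#}(v_iv_j)=\mu_{T}(v_i,v_j)/m(T)$. The support statement then reads off immediately: the $(i,j)$ entry is nonzero exactly when some maximum matching renders the $v_{i}$--$v_{j}$ path alternating, that is when $\mathcal{M}(v_i,v_j)\neq\phi$, which is the definition of a maximally matchable pair; the diagonal entries vanish since a vertex is never maximally matchable with itself.

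I expect the main obstacle to be twofold. First, the clean combinatorial evaluation of the off-diagonal (bordered) minors: one must argue that in a tree the only way to route a nonzero permutation from the deleted row to the deleted column is along the unique $v_{i}$--$v_{j}$ path, and that every alternative either repeats a vertex or would require a cycle of length $\ge 3$ and hence vanishes. Second, and more delicate, is the sign bookkeeping---reconciling the signs coming from the transpositions, from the long cycle that traces $P$, and from the positions of the adjoined row and column in the determinantal representation---so that they collapse to the single factor $\pm1$ governed by the distance modulo $4$ encoded in $\alpha(v_i,v_j)$. Keeping the denominator at the sum of principal minors (rather than a sum of squares of all minors, which would introduce a spurious extra factor of $m(T)$) is the step where choosing the right determinantal representation matters most.
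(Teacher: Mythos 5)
First, a point of order: the paper does not prove this statement at all --- it is imported verbatim as \cite[Theorem 1]{pavsi}, so there is no in-paper proof to compare your attempt against. Judged on its own terms, your outline follows a sensible and standard route (reduce $A^{\#}$ to $A^{\dagger}$ via symmetry, expand minors of a forest combinatorially), and several of your intermediate claims are correct and verifiable: $\operatorname{rank}A=2\nu(T)$, the characteristic polynomial is the weighted matching polynomial, a forest has at most one perfect matching, and $\sum_{|S|=r}\det A[S]=(-1)^{\nu}m(T)$ (this checks out numerically on $K_{1,n}$ and on the $P_5$ example in Section 2).

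The genuine gap is the load-bearing step you defer to ``choosing the right determinantal representation.'' The representation of $A^{\dagger}$ actually available in \cite{bg} has denominator $\sum_{I,J}(\det A[I|J])^2$ (the squared volume), and a matching numerator summed over \emph{all} $r\times r$ submatrices; for a symmetric rank-$r$ matrix this denominator equals $\bigl(\sum_{S}\det A[S]\bigr)^{2}=m(T)^{2}$, so to land on $\mu_T(v_i,v_j)/m(T)$ you would still have to prove that the numerator double sum factors as $(-1)^{\nu}m(T)\cdot(-1)^{\nu}\mu_T(v_i,v_j)$ --- a nontrivial combinatorial identity you neither state nor prove. Alternatively, a representation of the \emph{group} inverse with denominator $\sum_{|S|=r}\det A[S]$ does exist, but it is not the one in \cite{bg} and must itself be established (for instance via the Laurent expansion $(xI-A)^{-1}=\tfrac1x(I-AA^{\#})-\sum_{k\ge0}x^k(A^{\#})^{k+1}$, which identifies $-(A^{\#})_{ij}$ with a specific coefficient of $(-1)^{i+j}\det\bigl((xI-A)(j|i)\bigr)/\psi(x)$ and reduces the combinatorics to the classical path-plus-matching expansion of cofactors in a forest). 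Without fixing one of these two routes and carrying out either the factorization or the derivation of the alternative representation --- together with the mod-$4$ sign bookkeeping you explicitly leave open --- the argument does not yet constitute a proof; it is an accurate map of where the proof has to go.
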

The understanding here is that if a pair $v_i,v_j$ is not maximally matchable (which includes the case when $v_i = v_j$), then $w^{\#} (v_iv_j)=0$.

\begin{rem}\label{consequencepi}
Let $T$ be a weighted tree and $v_i,v_j$ be two arbitrary vertices in $T$. By Theorem \ref{pi}, it is clear that there is an alternating path from $v_i$ to $v_j$ in $T$ iff $v_iv_j$ is an edge in $T^{\#}$. We will be making use of this fact, repeatedly.
\end{rem}

\begin{rem}\label{consequencepi_2}
Let $T$ be a weighted tree with vertex set $V(G)=\{v_1,v_2, . . . , v_n\}$. Then $deg(v_i)$ in $T^{\#}$ is the number of alternating paths in $T$ starting from $v_i$.
\end{rem}

\begin{lem}\label{pebelongstomm}
Let $T$ be a singular weighted tree and $e$ be a pendant edge in $T$. Then $e$ belongs to some maximum matching of $T$.
\end{lem}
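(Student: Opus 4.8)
The plan is to start from an arbitrary maximum matching $M$ of $T$ and, if necessary, modify it by a single edge swap so that it contains the prescribed pendant edge $e$. Write $e=uv$, where $u$ is the pendant vertex, so that $e$ is the unique edge of $T$ incident with $u$. The entire argument hinges on this degree-one property of $u$: in any matching, the vertex $u$ can be saturated only through $e$.

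First I would dispose of the trivial case. If $e\in M$, then $M$ itself is a maximum matching containing $e$ and there is nothing to prove, so assume $e\notin M$. Since $\deg(u)=1$, the vertex $u$ is then left unsaturated by $M$. Here is where maximality enters: were $v$ also unsaturated by $M$, the set $M\cup\{e\}$ would be a strictly larger matching, contradicting the maximality of $M$. Hence $v$ must be saturated, say by an edge $f=vw\in M$; note $w\neq u$, since otherwise $f=uv=e$, contrary to $e\notin M$.

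The final step is the swap. I would set $M'=(M\setminus\{f\})\cup\{e\}$ and verify that $M'$ is again a matching: the deleted edge $f$ was the unique edge of $M$ meeting $v$, and $u$ met no edge of $M$ at all, so the inserted edge $e=uv$ shares no endpoint with any edge of $M\setminus\{f\}$. Since $|M'|=|M|$, the matching $M'$ is maximum and, by construction, $e\in M'$. I expect no serious obstacle here; the only points demanding care are the explicit use of maximality to exclude the configuration in which both $u$ and $v$ are unsaturated, and the routine bookkeeping verifying that the swap produces a valid matching. It is worth remarking that the singularity hypothesis is not actually invoked in this argument — the degree-one structure of the pendant edge is what does all the work, and the hypothesis is retained only because the lemma is applied in the singular setting of the results that follow.
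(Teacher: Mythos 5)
Your proof is correct and follows essentially the same route as the paper's: take an arbitrary maximum matching $M$, and if $e\notin M$ swap out the unique matching edge at the non-pendant endpoint of $e$ and swap in $e$, using the degree-one property of the pendant vertex to see the result is still a matching of the same size. Your observation that the singularity hypothesis plays no role in the argument is also accurate.
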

\begin{proof}
Let $M$ be a maximum matching of $T$. If $e$ belongs to $M$, then we are done. So, let us suppose that $e$ does not belong to $M$. Then $e$ is adjacent to an edge of $M$; otherwise, $M \cup \{e\}$ is a matching. This, however, contradicts the fact that $M$ is a maximum matching. Let $e$ be adjacent to $v_iv_j\in M$. Since $e$ is a pendant edge, $\Tilde{M}:=(M\backslash \{v_iv_j\})\cup \{e\}$ is a maximum matching in $T$. Clearly, $e$ belongs to the maximum matching $\Tilde{M}$.
\end{proof}

\begin{lem}\label{nottree}
Let $T$ be a weighted tree other than the star. If $T$ has at least two adjacent pendant edges, then $T^{\#}$ is not a tree.
\end{lem}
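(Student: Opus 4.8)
The plan is to produce an explicit $4$-cycle in $T^{\#}$, which already shows $T^{\#}$ is not a tree. Let $u_1, u_2$ be two pendant vertices sharing a common neighbour $v$, so that $e_1 = u_1 v$ and $e_2 = u_2 v$ are the two adjacent pendant edges. Two leaves hanging off $v$ already prevent a perfect matching (a matching can saturate at most one of $u_1, u_2$), so $T$ is singular and Lemma \ref{pebelongstomm} is available; moreover, since $T$ is not a star, $v$ must have a neighbour $w$ that is not a leaf, and I fix a neighbour $z \neq v$ of $w$. I will use Remark \ref{consequencepi} repeatedly: two vertices are joined by an edge in $T^{\#}$ if and only if there is an alternating path between them in $T$; in particular, any edge lying in some maximum matching is an edge of $T^{\#}$, being an alternating path of length one. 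Thus by Lemma \ref{pebelongstomm} the pendant edge $u_1 v$ lies in a maximum matching, so $u_1 v \in E(T^{\#})$; and swapping $u_1 v$ for $u_2 v$ in that matching (legitimate because $u_2$, whose only neighbour is $v$, is then unsaturated) gives $u_2 v \in E(T^{\#})$ as well. So $u_1$ and $u_2$ are both adjacent to $v$ in $T^{\#}$.

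The core step is to exhibit one alternating path of length three out of $u_1$, namely $u_1, v, w, a$ for some $a \notin \{u_1, u_2, v\}$. Starting from a maximum matching $M$ with $u_1 v \in M$, I examine $w$. If $w$ is saturated by $M$, it is matched to some neighbour $a \neq v$ (it cannot be matched to $v$, which is taken by $u_1$), and $u_1, v, w, a$ is alternating. If $w$ is unsaturated, then $z$ must be saturated (otherwise $M \cup \{wz\}$ would be a larger matching), and replacing the $M$-edge at $z$ by $wz$ yields a maximum matching of the same size that still contains $u_1 v$ and now saturates $w$ via $wz$; here $a = z$. Either way $u_1 a \in E(T^{\#})$ with $a \neq v, u_1, u_2$.

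Finally I transport this path to $u_2$ by the same leaf-swap: replacing $u_1 v$ by $u_2 v$ in the relevant maximum matching turns the alternating path $u_1, v, w, a$ into the alternating path $u_2, v, w, a$ (the swap alters only the edge at $v$, leaving $wa$ in the matching and $vw$ outside it), so $u_2 a \in E(T^{\#})$ too. Now $u_1 v$, $v u_2$, $u_2 a$, $a u_1$ are four edges of $T^{\#}$ on the four distinct vertices $u_1, v, u_2, a$, that is, a $4$-cycle; hence $T^{\#}$ is not a tree.

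The step I expect to be delicate is the core step: guaranteeing a single maximum matching that simultaneously uses the pendant edge $u_1 v$ and saturates the internal vertex $w$ by an edge pointing away from $v$. This is exactly where the hypotheses that $T$ is not a star (so that such a $w$, and a target $z$, exist) and that $M$ is maximum (ruling out an augmenting edge at $w$) are used, and where one must verify that the exchange at $z$ does not disturb $u_1 v$. The leaf-swap itself is routine, once one observes that the two leaves are interchangeable at $v$ in any maximum matching.
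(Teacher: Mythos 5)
Your proof is correct and follows essentially the same strategy as the paper's: both exhibit a $4$-cycle in $T^{\#}$ on the two adjacent leaves, their common neighbour, and a fourth vertex reached by an alternating path, using the interchangeability of the two pendant edges in any maximum matching. The only real difference lies in how the length-three alternating path out of $u_1$ is produced: the paper splits into the cases where an alternating path of length at least three starting with the pendant edge does or does not exist (handling the latter via a length-four path and two matching exchanges), whereas you split on whether the non-leaf neighbour $w$ of $v$ is saturated and, if not, perform a single exchange at $z$ --- a slightly cleaner route to the same cycle.
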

\begin{proof}
Observe that since $T$ has at least two adjacent pendant edges, it must be singular. Let $v_iv_p$ and $v_iv_q$ be adjacent pendant edges of $T$. Since $T$ is not a star, it has a path of length at least three. So, the number of edges in a maximum matching will be at least two. Let $M$ be a maximum matching in $T$ that contains the pendant edge $v_iv_p$ (This is possible because of Lemma \ref{pebelongstomm}).

{\bf Case (i)}: Suppose that there exists an alternating path $P$ in $T$, of length at least three with respect to $M$ that has $v_iv_p$ as the initial edge. Note that the initial vertex of $P$ is $v_p$; let the terminal vertex be $v_k$. Since $v_iv_q$ is a pendant edge and adjacent to $v_iv_p$, $\Tilde{M}:=(M\backslash \{v_iv_p\})\cup \{v_iv_q\}$ is also a maximum matching in $T$. The path $Q$ with initial vertex $v_q$ and terminal vertex $v_k$ is an alternating path with respect to $\Tilde{M}$ in $T$. Observe that, $P$ and $Q$ are identical except for the initial edge. Now, by using Remark \ref{consequencepi}, the vertex sequence $(v_i,v_p,v_k,v_q,v_i)$ is a cycle in $T^{\#}$, proving that $T^{\#}$ is not a tree.

{\bf Case (ii)}: Suppose that there does not exist any alternating path of length at least three with respect to $M$ that has $v_iv_p$ as an initial edge. Then there exists a path $v_pv_iv_lv_mv_j$ of length four in $T$ such that edges $v_pv_i,v_mv_j$ belong to $M$. Observe that $\overline{M}=(M\backslash \{v_mv_j\})\cup \{v_lv_m\}$ will also be a maximum matching in $T$. Also, $\widehat {M}=(\overline{M}\backslash \{v_iv_p\})\cup \{v_iv_q\}$ is a maximum matching in $T$. So, the path $v_pv_iv_lv_m$ is an alternating path with respect to $\overline{M}$ and the path $v_qv_iv_lv_m$ is an alternating path with respect to $\widehat{M}$ in $T$. Again, by using Remark \ref{consequencepi}, the vertex sequence $(v_i,v_p,v_m,v_q,v_i)$ will form a cycle in $T^{\#}$, and so $T^{\#}$ is not a tree.
\end{proof}
% \TR{The next result and its corollary (newly included) do not depend on Theorem \ref{pi}, right? So, can we move it to the previous section? Of course, Lemma \ref{pebelongstomm} also should be moved, along with.}\\

% \TR{Paraphrasing Theorem \ref{nottree}, we obtain the following:}

% \TR{\begin{thm}\label{conversenottree}
% Let $T$ be a weighted tree, which is not a star. If $T^{\#}$ is a tree, then no two pendant edges are adjacent in $T$.
% \end{thm}}

\begin{rem}
Lemma \ref{nottree} may be paraphrased as follows. Let $T$ be not a star. If $T$ and $T^{\#}$ are weighted trees, then no two pendant edges are adjacent in $T$. Example \ref{wtpathgpinv}, given later, shows that the converse of Lemma \ref{nottree} is not true.
\end{rem}

\begin{lem}\label{contain4cycle}
Let $T$ be a singular weighted tree. Let $T$ have a length three alternating path $P$ \rm{(}relative to some maximum matching\rm{)} such that the middle non-matching edge of $P$ belongs to another maximum matching. Then $T^{\#}$ is not a tree.
\end{lem}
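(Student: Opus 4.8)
The plan is to exhibit a four-cycle in $T^{\#}$ directly from the hypotheses, exactly in the spirit of the proof of Lemma \ref{nottree}, using the characterization recorded in Remark \ref{consequencepi}: two distinct vertices are joined by an edge in $T^{\#}$ precisely when there is an alternating path between them with respect to \emph{some} maximum matching of $T$. Write the given length-three alternating path as $P=v_1v_2v_3v_4$, alternating relative to a maximum matching $M$, so that $v_1v_2,v_3v_4\in M$ while the middle edge $v_2v_3\notin M$. Let $M'$ denote the maximum matching which, by hypothesis, contains $v_2v_3$. Because $P$ is a path of length three in the tree $T$, its four vertices $v_1,v_2,v_3,v_4$ are pairwise distinct.

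Next I would read off four maximally matchable pairs, one for each edge of the prospective cycle $(v_1,v_2,v_3,v_4,v_1)$. The single edge $v_1v_2\in M$ is a length-one alternating path with respect to $M$, so $(v_1,v_2)$ is maximally matchable; similarly $v_3v_4\in M$ makes $(v_3,v_4)$ maximally matchable. The whole path $P$ is alternating with respect to $M$, which supplies the ``long'' pair $(v_1,v_4)$. Finally, the edge $v_2v_3$ is a length-one alternating path with respect to $M'$, so $(v_2,v_3)$ is maximally matchable as well. By Remark \ref{consequencepi}, each of these four pairs is an edge of $T^{\#}$.

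Assembling the pieces, $v_1v_2$, $v_2v_3$, $v_3v_4$ and $v_4v_1$ are all edges of $T^{\#}$ on the four distinct vertices $v_1,v_2,v_3,v_4$; hence the vertex sequence $(v_1,v_2,v_3,v_4,v_1)$ is a genuine four-cycle in $T^{\#}$. Since a graph containing a cycle cannot be a tree, this proves the claim.

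The argument is essentially immediate once Remark \ref{consequencepi} is available, so there is no substantial obstacle. The only points that require a moment's care are the (routine) observation that a single matching edge counts as an alternating path of length one --- this is what legitimizes the two ``short'' pairs $(v_1,v_2)$, $(v_3,v_4)$ and the chord $(v_2,v_3)$ as edges of $T^{\#}$, and is consistent with Lemma \ref{K1n} --- together with the verification that $v_1,v_2,v_3,v_4$ are distinct, which is what guarantees the four edges close up into an honest four-cycle rather than a degenerate configuration.
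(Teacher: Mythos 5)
Your proposal is correct and follows essentially the same route as the paper: both arguments exhibit the four-cycle $(v_1,v_2,v_3,v_4,v_1)$ in $T^{\#}$ by recognizing the two matching edges of $P$, the full path $P$, and the middle edge (as a length-one alternating path for the second matching) as alternating paths, then invoking Remark \ref{consequencepi}. Your write-up is in fact slightly more complete, since the paper only explicitly justifies two of the four edges of the cycle.
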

\begin{proof}
Let $P$ be the path $v_iv_pv_qv_j$ which is alternating with respect to a maximum matching $M$. Let the middle edge $v_pv_q$ belong to another maximum matching $\Tilde{M}$. Since $v_iv_p$ and $v_qv_j$ are both length one alternating paths with respect to $M$, they are also edges in $T^{\#}$, by Remark \ref{consequencepi}. Now, the vertex sequence $(v_i,v_p,v_q,v_j,v_i)$ is a 4-cycle in $T^{\#}$. Thus, $T^{\#}$ is not a tree.
\end{proof}

Recall that a {\it corona tree} is a tree obtained by attaching a new pendant vertex to each vertex of a given tree, and corona trees are always non-singular.

For a non-singular undirected unweighted tree $T$, it is shown in \cite[Theorem 2.11]{sms} that $T^{-1}$ is again a tree if and only if $T$ is a corona tree. In the next Lemma, we obtain a similar characterization for singular weighted trees.
\begin{lem}\label{lasttree}
Let $T$ be a singular weighted tree. Then, $T^{\#}$ is again a tree if and only if $T$ is a star tree.
\end{lem}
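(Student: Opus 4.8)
The plan is to prove the two implications separately, the reverse one being immediate and the forward one containing all the content. If $T$ is a (singular) star, then by Corollary~\ref{star} we have $T^{\#}\cong T$, which is a tree; this settles the ``if'' direction. For the converse I would prove the contrapositive: assuming $T$ is singular and is \emph{not} a star, I will exhibit a cycle in $T^{\#}$. The first reduction is to dispose, via Lemma~\ref{nottree}, of the case in which $T$ has two adjacent pendant edges. Thus the entire problem reduces to the case where $T$ is singular, is not a star, and has no two adjacent pendant edges, and here the goal is to produce exactly the configuration demanded by Lemma~\ref{contain4cycle}: a length-three alternating path whose middle edge also belongs to a second maximum matching.

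To build this configuration I would first locate a leaf $u$ of $T$ that is left exposed by some maximum matching $M_0$; write $v$ for its unique neighbour. Since $u$ is exposed and $M_0$ is maximum, $v$ is matched, say $vv'\in M_0$ with $v'\neq u$. The hypothesis that no two pendant edges are adjacent is used precisely here: as $u$ is already a leaf attached to $v$, the vertex $v'$ cannot itself be a leaf, so $v'$ has a neighbour different from $v$. Next I would construct a \emph{second} maximum matching $M$ that contains the pendant edge $uv$ (possible by Lemma~\ref{pebelongstomm}) and in which $v'$ is also matched, say $v'v''\in M$; concretely one takes a maximum matching of $T\setminus\{u,v\}$ covering $v'$ and adjoins $uv$. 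The resulting path $u\,v\,v'\,v''$ is alternating with respect to $M$, its two end edges $uv$ and $v'v''$ lying in $M$ while the middle edge $vv'$ does not, and that middle edge $vv'$ belongs to the other maximum matching $M_0$. Lemma~\ref{contain4cycle} then produces the $4$-cycle $(u,v,v',v'')$ in $T^{\#}$, so $T^{\#}$ is not a tree, as required.

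Two auxiliary facts must be secured for this argument. The first, and the step I expect to be the main obstacle, is that a singular tree always has an exposed leaf; this is the only point where singularity is genuinely used. I would prove it by starting from any maximum matching and an exposed vertex $z$, rooting $T$ at $z$, and then repeatedly swapping: while the current exposed vertex is not a leaf, it has a child matched to a grandchild (by maximality every child is matched, and not upward), and replacing that matching edge by the edge joining the exposed vertex to that child yields a maximum matching whose exposed vertex has strictly larger depth. Finiteness of $T$ forces this process to terminate at an exposed leaf. The second fact is that $v'$ can be covered by a maximum matching of $T\setminus\{u,v\}$: this is the standard observation that every non-isolated vertex is covered by some maximum matching --- if a maximum matching misses it, swap a single edge at a covered neighbour --- and it applies because, as noted above, $v'$ is non-isolated in $T\setminus\{u,v\}$. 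With these two facts in hand, the remainder is the short matching manipulation feeding Lemma~\ref{contain4cycle} described above.
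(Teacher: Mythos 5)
Your proof is correct, but in the hard direction it follows a genuinely different route from the paper's in the main case. Both arguments first dispose of the situation where $T$ has two adjacent pendant edges via Lemma~\ref{nottree}. From there the paper argues structurally: a singular tree is not a corona tree, so either some non-pendant vertex has at least two pendant neighbours (the case already handled) or some non-pendant vertex has no pendant neighbour, and the latter case is settled by a several-layer case analysis on how the edges at that vertex interact with a maximum matching, eventually invoking Lemma~\ref{contain4cycle} or exhibiting a length-five alternating path. You instead isolate the complementary case ``no two adjacent pendant edges'' and introduce an auxiliary fact the paper never states: a singular tree has a leaf $u$ exposed by some maximum matching $M_0$, proved by rooting at an exposed vertex and repeatedly swapping matching edges to push the exposed vertex to strictly greater depth. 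With that leaf in hand, the matched edge $vv'\in M_0$ at its neighbour, the observation that $v'$ is not a leaf (else two pendant edges would be adjacent), and the standard facts that $uv$ extends a maximum matching of $T\setminus\{u,v\}$ to one of $T$ and that every non-isolated vertex is covered by some maximum matching, you produce in one stroke the length-three alternating path $u\,v\,v'\,v''$ whose middle edge lies in $M_0$, so Lemma~\ref{contain4cycle} applies. Your route buys a shorter and more unified treatment of the second case and makes the role of singularity completely explicit (it is used exactly once, to produce the exposed leaf); the cost is the extra exposed-leaf lemma, whereas the paper gets by with the corona-tree dichotomy at the price of a longer case analysis. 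Both decompositions are exhaustive, so there is no gap.
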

\begin{proof}
Suppose $T$ is a star, then by Lemma \ref{star}, $T^{\#}$ is isomorphic to $T$. For the converse part, we prove it contrapositively, i.e. we show that if $T$ is not a star then $T^{\#}$ is not a tree. Since $T$ is a singular weighted tree, it is also not a corona tree. Then there exists a non-pendant vertex $v_k$ such that either it is adjacent to more than one pendant vertex or not adjacent to any pendant vertex. For the first case, by Lemma \ref{nottree}, $T^{\#}$ is not a tree. Now, we discuss the later case. Suppose $v_k$ is adjacent to $m$ non-pendant vertices $v_{kj}$, $1\leq j\leq m$. Let $M$ be a maximum matching of $T$.

{\bf Case (i):} None of the edges $v_kv_{kj}$ belong to $M$. Then there exists an edge $v_{kj}v_{\Tilde{kj}}$, $1\leq j\leq m$ other than $v_{kj}v_k$ such that $\{v_{kj}v_{\Tilde{kj}} : 1\leq j\leq m\}\subseteq M$ otherwise, it will contradict the fact that $M$ is a maximum matching. Now, $M_1=(M\backslash \{v_{k1}v_{\Tilde{k1}}\})\cup \{v_kv_{k1}\}$ and $M_2=(M\backslash \{v_{k2}v_{\Tilde{k2}}\})\cup \{v_kv_{k2}\}$ are also maximum matchings of $T$. Hence, $v_{k1}v_kv_{k2}v_{\Tilde{k2}}$ is an alternating path with respect to $M_1$ and $v_kv_{k2}$ is a matching edge in $M_2$. Then, By Lemma \ref{contain4cycle}, $T^{\#}$ is not a tree.

{\bf Case (ii):} For some $i$, $v_kv_{ki}$ belongs to $M$. Then, there exists (if not, we can construct such kind of maximum matching always) a maximum matching $\widehat {M}$ such that $\{v_kv_{ki}\}\cup \{v_{kj}v_{\overline{kj}}: 1\leq j\leq m,j\neq i\})\subseteq \widehat {M}$, where $v_{kj}v_{\overline{kj}}$ is some edge adjacent to $v_{kj}$ other than $v_{kj}v_k$, $1\leq j\leq m$.

\underline{Subcase (i)}: If $v_{ki}v_{\overline{ki}}$ is a pendant edge. Then, $\overline{M}=(\widehat{M}\backslash \{v_kv_{ki},v_{kp}v_{\overline{kp}}\})\cup \{v_{ki}v_{\overline{ki}},v_kv_{kp}\}$ is also a maximum matching in $T$ where $p\neq i$. Again, by Lemma \ref{contain4cycle}, $T^{\#}$ is not a tree.

\underline{Subcase (ii)}: Suppose $v_{ki}v_{\overline{ki}}$ is not a pendant edge. Then, there exists a maximum matching $\Tilde{M}$ such that $\{v_kv_{ki},v_{\overline{ki}}v_{\widehat{ki}}\}\cup \{v_{kj}v_{\overline{kj}} : 1\leq j\leq m,j\neq i\})\subseteq \Tilde {M}$, where $v_{\overline{ki}}v_{\widehat{ki}}$ is some edge adjacent to $v_{\overline{ki}}$ other than $v_{ki}v_{\overline{ki}}$. So, the path $v_{\overline{kq}}v_{kq}v_kv_{ki}v_{\overline{ki}}v_{\widehat{ki}}$ is a length five alternating path with respect to $\Tilde{M}$ for some $q\neq i$, which leads to forming a cycle of length four in $T^{\#}$.

Hence, this completes the proof.
\end{proof}

In \cite[Theorem 2.11]{sms}, the authors proved that for a non-singular tree $T$, $T^{-1}$ is again a tree if and only if $T$ is a corona tree, which in turn, holds if and only if $T^{-1}$ is isomorphic to $T$. It is natural to ask if a similar looking characterization applies when the tree is singular. In the next result, we identify a class of singular trees for which an analogue holds.

\begin{thm}\label{fourconditions}
For a singular weighted tree $T$ with $n$ vertices, the following statements are equivalent:\\
(i) The number of alternating paths in $T$ is $n-1$.\\
(ii) $T^{\#}$ is a tree. \\
(iii) $T$ is the star graph. \\
(iv) $T^{\#}$ is isomorphic to $T$.
\end{thm}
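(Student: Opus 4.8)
The plan is to establish all four equivalences through the chain
\[
\text{(i)} \Longleftrightarrow \text{(ii)} \Longleftrightarrow \text{(iii)} \Longrightarrow \text{(iv)} \Longrightarrow \text{(ii)},
\]
leaning heavily on the lemmas already proved. The implication (ii) $\Leftrightarrow$ (iii) requires nothing new: it is precisely the content of Lemma \ref{lasttree}, which says that for a singular weighted tree, $T^{\#}$ is a tree if and only if $T$ is a star. So the only genuinely fresh work is to fold in statement (i) and to close the loop through (iv).

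The heart of the argument is (i) $\Leftrightarrow$ (ii), and the key link is Remark \ref{consequencepi}: two distinct vertices $v_i, v_j$ are joined by an edge in $T^{\#}$ if and only if the unique $v_iv_j$-path in $T$ is alternating with respect to some maximum matching. Since the path between two vertices of a tree is unique, the alternating paths of $T$ are in bijection with the maximally matchable pairs, hence with the edges of $T^{\#}$; thus the number of alternating paths in $T$ equals $|E(T^{\#})|$. (Equivalently, summing the degree count of Remark \ref{consequencepi_2} over all vertices gives $\sum_i \deg_{T^{\#}}(v_i) = 2|E(T^{\#})|$, since each alternating path is counted once from each endpoint.) Now $T^{\#}$ is connected by Proposition \ref{connected}(a) and lives on the same $n$ vertices as $T$, and a connected graph on $n$ vertices has exactly $n-1$ edges if and only if it is a tree. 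Hence "the number of alternating paths is $n-1$" is equivalent to "$T^{\#}$ is a tree," which is exactly (i) $\Leftrightarrow$ (ii).

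To close the loop I would use two elementary steps. For (iii) $\Rightarrow$ (iv), Corollary \ref{star} gives immediately that a weighted star satisfies $T^{\#} \cong T$. For (iv) $\Rightarrow$ (ii), the implication is trivial: if $T^{\#}$ is isomorphic to the tree $T$, then $T^{\#}$ is itself a tree. Combined with the equivalences (ii) $\Leftrightarrow$ (iii) and (i) $\Leftrightarrow$ (ii) established above, this yields the equivalence of all four statements.

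I expect the only delicate point to be the bookkeeping inside (i) $\Leftrightarrow$ (ii). One must argue that $T^{\#}$ genuinely spans all $n$ vertices with no isolated vertex, so that the characterization of trees as connected graphs with $n-1$ edges applies; this is exactly what connectedness from Proposition \ref{connected}(a) supplies. One must also be careful that an alternating path is counted once per unordered vertex pair rather than once per (matching, path) incidence, so that the count matches $|E(T^{\#})|$ exactly. Beyond these two observations, every step reduces to a previously cited lemma.
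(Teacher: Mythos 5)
Your proposal is correct and follows essentially the same route as the paper: (i)$\Leftrightarrow$(ii) via Remark \ref{consequencepi} and Proposition \ref{connected}(a), (ii)$\Leftrightarrow$(iii) via Lemma \ref{lasttree}, and (iii)$\Rightarrow$(iv) via Corollary \ref{star}. The only (harmless) difference is that you close the cycle with the trivial implication (iv)$\Rightarrow$(ii), whereas the paper proves (iv)$\Rightarrow$(i) by first arguing that $T$ must be a star and then counting the alternating paths of a star directly; your closing step is, if anything, slightly cleaner.
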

\begin{proof}
$(i)\Rightarrow (ii)$: Since $T$ has $n-1$ alternating paths, by Remark \ref{consequencepi}, $T^{\#}$ has $n-1$ edges. Since $T^{\#}$ is connected and has $n-1$ edges, $T^{\#}$ is a tree.\\
$(ii)\Rightarrow (iii)$: It is clear from Lemma \ref{lasttree}.\\
$(iii)\Rightarrow (iv)$: Follows from Corollary \ref{star}.\\
$(iv)\Rightarrow (i)$: Let $T$ be not the star graph. By Lemma \ref{lasttree}, $T^{\#}$ is not a tree. So, $T$ is not isomorphic to $T^{\#}$. However, since $T^{\#}$ is assumed to be isomorphic to $T$, we conclude that $T$ is a star graph. Since each singleton edge set will form a maximum matching in a star, the number of alternating paths in a star is the number of edges, viz., $n-1$.
\end{proof}
\begin{cor}\label{zerononzero}
Let $T$ be a singular weighted tree and $A$ be the adjacency matrix of $T$. Then $A$ and $A^{\#}$ have the same zero non-zero pattern if and only if $T$ is a weighted star.
\end{cor}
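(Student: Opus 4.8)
The plan is to prove Corollary \ref{zerononzero} by translating the statement about zero/non-zero patterns of $A$ and $A^{\#}$ into the graph-theoretic language of $T$ and $T^{\#}$, and then reducing it directly to the characterization already established in Theorem \ref{fourconditions}. The key observation is that two symmetric matrices $A$ and $A^{\#}$ have the same zero/non-zero pattern precisely when their associated weighted graphs $T$ and $T^{\#}$ have the same edge set (including the same loops, i.e. the same diagonal pattern), since by definition the $(i,j)$-entry of $A$ is nonzero iff $v_iv_j$ is an edge of $T$, and similarly the $(i,j)$-entry of $A^{\#}$ is nonzero iff $v_iv_j$ is an edge of $T^{\#}$. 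Thus the condition ``$A$ and $A^{\#}$ have the same zero/non-zero pattern'' is equivalent to the condition that $T$ and $T^{\#}$ have identical adjacency structure on the common vertex set.

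First I would dispose of the easy direction: if $T$ is a weighted star, then by Corollary \ref{star} (or Lemma \ref{K1n}) we have $A^{\#}=\frac{1}{p}A$, so $A$ and $A^{\#}$ differ only by a nonzero scalar multiple and therefore have exactly the same zero/non-zero pattern. Note that this direction gives more than mere graph isomorphism: it gives an actual entrywise matching of the patterns, which is what the corollary asks for, because the group inverse graph is defined on the same vertex set as $T$.

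For the converse, I would argue contrapositively. Suppose $T$ is not a weighted star. Then by Lemma \ref{lasttree}, $T^{\#}$ is not a tree; in particular $T^{\#}$ contains a cycle, so $T^{\#}$ has at least $n$ edges (strictly more edges than the $n-1$ edges of the tree $T$). Since $T$ and $T^{\#}$ share the same vertex set, and the number of nonzero off-diagonal entry-pairs of $A$ equals the number of edges of $T$ while that of $A^{\#}$ equals the number of edges of $T^{\#}$, the two matrices cannot have the same zero/non-zero pattern once the edge counts differ. Hence $A$ and $A^{\#}$ have distinct patterns, completing the contrapositive.

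The reasoning here is essentially a bookkeeping consequence of Theorem \ref{fourconditions} and the definition of the group inverse graph, so there is no genuine analytic obstacle. The one point that requires a little care is making the equivalence between ``same zero/non-zero pattern'' and ``same edge set'' fully precise: I must confirm that the pattern comparison is entrywise on a fixed labelling of the common vertex set rather than up to permutation, and I must make sure the diagonal (loop) entries are handled correctly, observing that both $A$ and $A^{\#}$ are zero on the diagonal for a loop-free tree (indeed, $\mu_T(v_i,v_i)=0$ since the case $v_i=v_j$ is excluded in the definition), so the diagonal contributes nothing to the comparison. Once this identification is in place, the corollary follows immediately from the edge-count discrepancy supplied by Lemma \ref{lasttree}.
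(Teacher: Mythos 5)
Your proof is correct and follows essentially the same route as the paper: Lemma \ref{K1n} for the star direction, and the characterization in Lemma \ref{lasttree} / Theorem \ref{fourconditions} for the converse. The only point worth making explicit is that your step ``$T^{\#}$ is not a tree, hence contains a cycle, hence has at least $n$ edges'' relies on $T^{\#}$ being connected, which is Proposition \ref{connected}(a); with that citation added your edge-count argument is complete, and it is in fact slightly more careful than the paper's one-line reduction, since it does not conflate ``same zero/non-zero pattern'' with ``isomorphic''.
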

\begin{proof}
Note that $A$ and $A^{\#}$ have the same zero non-zero pattern means $T$ and $T^{\#}$ are isomorphic to each other. Then, the proof follows from Lemma \ref{K1n} and Theorem \ref{fourconditions}.
\end{proof}
We say that a weighted undirected graph $H$ is a subgraph of a weighted undirected graph $G$ if the underlying graph of $H$ is a subgraph of the underlying graph of $G$.

In the next result, we provide some information on the group inverse of odd paths. We denote a path by $P_n$ and its edge set by $\{v_iv_{i+1}: 1 \leq i\leq n-1\}$. Then $\widehat {M}=\{v_1v_2,v_3v_4, \ldots , v_{2n-1}v_{2n} \}$ and $\overline{M}=\{v_2v_3, v_4v_5, \ldots , v_{2n}v_{2n+1} \}$ are two maximum matchings in $P_{2n+1}$.

We conclude this section with a result on the group inverses of odd paths. 

\begin{prop}\label{oddpath}
Consider $P_{2n+1}$ as a weighted graph. Then, we have the following:\\
(i) $P_{2n+1}$ is a spanning subtree of $P_{2n+1}^{\#}$.\\
(ii) $v_iv_j$ is an edge in $P_{2n+1}^{\#}$ if and only if $i+j$ is odd.\\
(iii) For $n\geq 2$, the graph $P_{2n+1}^{\#}$ does not have any pendant vertices.
\end{prop}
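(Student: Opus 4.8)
The plan is to use Remark \ref{consequencepi}, which reduces every assertion about the edge set of $P_{2n+1}^{\#}$ to the purely combinatorial question of which pairs of vertices are joined by an alternating path (relative to some maximum matching) in the path $P_{2n+1}$. So the entire proof is a matching-theoretic analysis of $P_{2n+1}$, and the key preliminary observation is that $P_{2n+1}$ has exactly $n+1$ maximum matchings, each of size $n$, obtained by choosing where to place the single ``gap'' (unmatched vertex): for $0 \le k \le n$, the matching $M_k = \{v_1v_2, v_3v_4, \ldots, v_{2k-1}v_{2k}\} \cup \{v_{2k+2}v_{2k+3}, \ldots, v_{2n}v_{2n+1}\}$ leaves exactly $v_{2k+1}$ unsaturated. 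Every maximum matching of $P_{2n+1}$ has this form, and I would establish this first, since all three parts lean on it.

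For part (ii), I would prove both directions via a parity argument. For necessity, note that along any alternating path $v_i = u_0 u_1 \cdots u_L = v_j$ in $P_{2n+1}$ the consecutive vertices are consecutive integers, so $|i - j| = L$; since an alternating path (first and last edges in $M$) has odd length $L$, we get $i - j$ odd, hence $i + j$ odd. For sufficiency, suppose $i + j$ is odd with $i < j$; then the $v_iv_j$-subpath has odd length, and I would exhibit an explicit maximum matching making it alternating, namely match the subpath internally as $v_iv_{i+1}, v_{i+2}v_{i+3}, \ldots, v_{j-1}v_j$ (an odd-length alternating pattern starting and ending with a matching edge), and then complete to a maximum matching of the two pendant segments $v_1\cdots v_{i-1}$ and $v_{j+1}\cdots v_{2n+1}$ — the key point being that both tails have even length ($i-1$ and $2n+1-j$ are both even when $i+j$ is odd), so each can be perfectly matched, and the resulting matching is maximum.

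Part (i) is then almost immediate: the spanning-subtree claim says each original path edge $v_iv_{i+1}$ is an edge of $P_{2n+1}^{\#}$, which holds because $i + (i+1)$ is odd, so by (ii) every such edge survives; since $P_{2n+1}^{\#}$ has the same vertex set and $P_{2n+1}$ is already a spanning tree, $P_{2n+1}$ is a spanning subtree of $P_{2n+1}^{\#}$. (One should also confirm $P_{2n+1}$ is singular so that $A^{\#}$ is the honest group inverse, which follows from the odd number of vertices forcing an unsaturated vertex, i.e.\ rank deficiency.) For part (iii), I would use Remark \ref{consequencepi_2}: $\deg(v_i)$ in $P_{2n+1}^{\#}$ equals the number of vertices $v_j$ with $i+j$ odd, i.e.\ the number of indices $j \in \{1, \ldots, 2n+1\}$ of opposite parity to $i$. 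Among $2n+1$ indices there are $n$ of one parity and $n+1$ of the other, so every vertex has degree either $n$ or $n+1$ in $P_{2n+1}^{\#}$; for $n \ge 2$ this is at least $2$, so there are no pendant (degree-one) vertices.

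I expect the main obstacle to be the sufficiency direction of (ii) — specifically, verifying cleanly that the internal matching of the $v_iv_j$-subpath together with perfect matchings of the two even-length tails really does assemble into a \emph{maximum} matching of $P_{2n+1}$ (not merely a matching with the right alternating property), and handling the boundary cases $i=1$ or $j=2n+1$ where one tail is empty. This is where the parity bookkeeping ($i-1$ and $2n+1-j$ even) must be stated carefully, and it is the one step where an off-by-one error could creep in; everything else is a direct consequence of the matching structure and the two Remarks.
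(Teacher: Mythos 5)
Your overall route is the paper's: everything is funnelled through Remark \ref{consequencepi} (and \ref{consequencepi_2}), and parts (i) and (iii) are fine --- indeed your (iii), which counts $\deg(v_i)$ in $P_{2n+1}^{\#}$ as the number of indices of opposite parity (so $n$ or $n+1$), is cleaner than the paper's exhibition of two explicit alternating paths per vertex. The necessity direction of (ii) via $|i-j|=L$ odd is also correct. The difference is in the sufficiency direction of (ii): the paper gets by with only the two extreme maximum matchings $\widehat{M}=\{v_1v_2,v_3v_4,\ldots\}$ and $\overline{M}=\{v_2v_3,v_4v_5,\ldots\}$, observing that any odd-length subpath is alternating with respect to whichever of these contains its first edge, whereas you build a matching tailored to the pair $v_i,v_j$.

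That tailored construction contains a genuine parity error, precisely at the step you flagged as delicate. You assert that when $i+j$ is odd the two tails $v_1\cdots v_{i-1}$ and $v_{j+1}\cdots v_{2n+1}$ both have an even number of vertices and hence can both be perfectly matched. This is false: their sizes are $i-1$ and $2n+1-j$, which sum to $2n-(j-i)$, an odd number since $j-i$ is odd; so exactly one tail has even size and the other has odd size (e.g.\ $i=1$, $j=4$ in $P_5$ leaves the single vertex $v_5$ as one tail). An odd path has no perfect matching, so the step as written fails. The construction is salvageable: perfectly match the even tail, take a maximum (near-perfect) matching of the odd tail leaving one vertex exposed, and verify the total edge count is $\tfrac{j-i+1}{2}+\tfrac{(i-1)+(2n+1-j)-1}{2}=n$, which certifies maximality since $P_{2n+1}$ has only $2n+1$ vertices. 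Alternatively, adopt the paper's device: since every edge of $P_{2n+1}$ lies in $\widehat{M}$ or $\overline{M}$, the $v_iv_j$-subpath is already alternating with respect to one of these two matchings, and no ad hoc construction is needed.
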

\begin{proof}
$(i)$ Since the maximum matching edges are also alternating paths, it follows by Remark \ref{consequencepi} that $v_iv_{i+1}$ is an edge in $P_{2n+1}^{\#}$, for all $i=1, 2,\ldots, 2n$. Therefore, $P_{2n+1}$ is a spanning subtree of $P_{2n+1}^{\#}$.

$(ii)$ Suppose that $i+j$ is odd. Then $i$ and $j$ have different parities. Every edge of a path in $P_{2n+1}$ belongs to either $\widehat {M}$ or $\overline{M}$. A path starting from $v_i$, with $i$ odd and ending with $v_j,$ where $j$ is even, is alternating with respect to the maximum matching $\widehat {M}$, whereas a path starting from $v_i$, when $i$ is even and ending with $v_j$, with $j$ being odd, is alternating with respect to $\overline{M}$. So, in both the cases, $v_iv_j$ is an edge in $P_{2n+1}^{\#}$, which follows again, by Remark \ref{consequencepi}.

We will prove the converse part by contrapositive equivalence. Let $i+j$ be even. Then $i$ and $j$ have the same parity. Then the length of any path starting from $v_i$ and ending in $v_j$ is even. So, the $v_iv_j$-path cannot be an alternating path. Thus, $v_iv_j$ cannot be an edge in $P_{2n+1}^{\#}$, by Remark \ref{consequencepi}. Therefore, if $v_iv_j$ is an edge in $P_{2n+1}^{\#},$ then $i+j$ is odd.

$(iii)$ Since the edge $v_1v_2$ and  the $v_1v_4$-path are two alternating paths starting from $v_1$ with respect to maximum matching $\widehat {M}$, the degree of the vertex $v_1$ in $P_{2n+1}^{\#}$ is at least two. Similarly, the $v_{2n}v_{2n+1}$-alternating path and the $v_{2n-2}v_{2n+1}$-alternating path with respect to maximum matching $\overline{M}$ implies the degree of the vertex $v_{2n+1}$ in $P_{2n+1}^{\#}$ is at least two.

For any vertex $v_i,i\neq 1,2n+1$, the $v_{i-1}v_i$ path and the $v_iv_{i+1}$ path are two alternating paths in $P_{2n+1}$. This implies the degree of $v_i$ in $P_{2n+1}^{\#}$ is also at least two. So, the degree of any vertex in $P_{2n+1}^{\#}$ is at least two. Therefore, there is no pendant vertex in $P_{2n+1}^{\#}$.
\end{proof}
\newpage
\begin{Exm}\label{wtpathgpinv}
Consider the weighted path $P_5$ and its group inverse $P_5^{\#}$.
\begin{figure}[H]
\begin{tikzpicture}[scale=1]
\draw  (0.,0.)-- (1.52,0.);
\draw  (1.52,0.)-- (3.,0.);
\draw  (3.,0.)-- (4.5,0.);
\draw  (4.5,0.)-- (6.,0.);
\begin{scriptsize}
\fill (0.,0.) circle (2.5pt);
\draw (0.14,0.33) node {$v_1$};
\fill (1.52,0.) circle (2.5pt);
\draw (1.66,0.37) node {$v_2$};
\draw (0.8,-0.35) node {$1$};
\fill (3.,0.) circle (2.5pt);
\draw (3.14,0.37) node {$v_3$};
\draw (2.3,-0.35) node {$1$};
\fill (4.5,0.) circle (2.5pt);
\draw (4.64,0.37) node {$v_4$};
\draw (3.8,-0.35) node {$1$};
\fill (6.,0.) circle (2.5pt);
\draw (6.14,0.37) node {$v_5$};
\draw (5.3,-0.35) node {$1$};
\end{scriptsize}
\end{tikzpicture}
\hspace{2.5cm}
\begin{tikzpicture}[scale=1]
\draw  (2.,1.)-- (2.,-1.);
\draw  (2.,1.)-- (4.,1.);
\draw  (4.,-1.)-- (4.,1.);
\draw  (2.,-1.)-- (4.,-1.);
\draw  (1.,0.)-- (4.,1.);
\draw  (1.,0.)-- (2.,-1.);
\begin{scriptsize}
\fill (2.,1.) circle (2.5pt);
\draw (2.14,1.37) node {$v_1$};
\fill (2.,-1.) circle (2.5pt);
\draw (2.14,-1.3) node {$v_4$};
\draw (1.77,-0.37) node {$-\frac{1}{3}$};
\fill (4.,1.) circle (2.5pt);
\draw (4.14,1.37) node {$v_2$};
\draw (3.04,1.25) node {$\frac{2}{3}$};
\fill (4.,-1.) circle (2.5pt);
\draw (4.14,-1.3) node {$v_3$};
\draw (4.27,0.15) node {$\frac{1}{3}$};
\draw (3.04,-1.38) node {$\frac{1}{3}$};
\fill (1.,0.) circle (2.5pt);
\draw (1.14,0.37) node {$v_5$};
\draw (2.66,0.29) node {$-\frac{1}{3}$};
\draw (1.12,-0.55) node {$\frac{2}{3}$};
\end{scriptsize}
\end{tikzpicture}
\caption{$P_5$ (Left) and $P_5^{\#}$ (Right)}
 \label{path}
\end{figure}
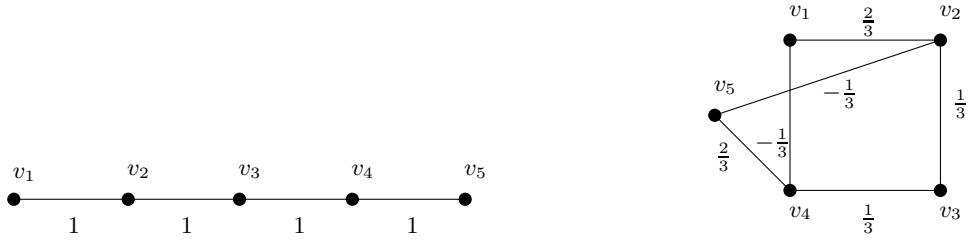
We record the following observations:\\
(i) The $v_1v_2v_3v_4v_5$ path, which is $P_5$ here, is a spanning subtree of $P_5^{\#}$.\\
(ii) $v_iv_j$ is an edge in $P_5^{\#}$ if and only if $i+j$ is odd.\\
(iii) $P_5^{\#}$ has no pendant vertices.\\
(iv) Evidently, $P_5$ has three maximum matchings \rm{(}each having two edges\rm{)}. The hypothesis of Lemma \ref{contain4cycle} holds. Observe that $P_{2n+1}^{\#}$ contains a 4-cycle.
\end{Exm}

\section{A new class of weighted trees}
In this section, we introduce a new class of weighted trees and show that the group inverse of the adjacency matrix of a positively weighted tree in this class is signature similar to a non-negative matrix. It is also shown that the smallest positive eigenvalue of the adjacency matrix of any member in the given class, with positive weight on each edge, is simple.

Now, we define a new class of weighted trees. The class $\mathbb{T}$ is defined by the requirement that each $T \in \mathbb{T}$ is a simple, undirected, weighted tree having properties that\\
(i) $T$ has at least one non-pendant vertex.\\
(ii) Each non-pendant vertex is adjacent to at least one pendant vertex in $T$.
\begin{Exm}
Consider the two weighted trees $T_1$ and $T_2$.
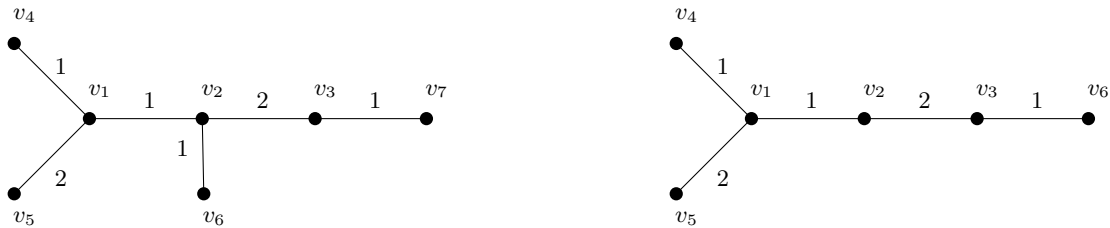
\begin{figure}[H]\label{newtree}
\begin{tikzpicture}[scale=1]
\draw  (1.,1.)-- (2.,0.);
\draw  (2.,0.)-- (1.,-1.);
\draw  (2.,0.)-- (3.5,0.);
\draw  (3.5,0.)-- (3.52,-1.);
\draw  (3.5,0.)-- (5.,0.);
\draw  (5.,0.)-- (6.48,0.);
\begin{scriptsize}
\fill (1.,1.) circle (2.5pt);
\draw (1.14,1.37) node {$v_4$};
\fill (2.,0.) circle (2.5pt);
\draw (2.14,0.37) node {$v_1$};
\draw (1.62,0.7) node {$1$};
\fill (1.,-1.) circle (2.5pt);
\draw (1.14,-1.33) node {$v_5$};
\draw (1.62,-0.8) node {$2$};
\fill (3.5,0.) circle (2.5pt);
\draw (3.64,0.37) node {$v_2$};
\draw (2.8,0.25) node {$1$};
\fill (3.52,-1.) circle (2.5pt);
\draw (3.66,-1.33) node {$v_6$};
\draw (3.24,-0.4) node {$1$};
\fill (5.,0.) circle (2.5pt);
\draw (5.14,0.37) node {$v_3$};
\draw (4.3,0.25) node {$2$};
\fill (6.48,0.) circle (2.5pt);
\draw (6.62,0.37) node {$v_7$};
\draw (5.8,0.25) node {$1$};
\end{scriptsize}
\end{tikzpicture}
\hspace{2.5cm}
\begin{tikzpicture}[scale=1]
\draw  (1.,1.)-- (2.,0.);
\draw  (2.,0.)-- (1.,-1.);
\draw  (2.,0.)-- (3.5,0.);
\draw  (3.5,0.)-- (5.,0.);
\draw  (5.,0.)-- (6.48,0.);
\begin{scriptsize}
\fill (1.,1.) circle (2.5pt);
\draw (1.14,1.37) node {$v_4$};
\fill (2.,0.) circle (2.5pt);
\draw (2.14,0.37) node {$v_1$};
\draw (1.62,0.7) node {$1$};
\fill (1.,-1.) circle (2.5pt);
\draw (1.14,-1.33) node {$v_5$};
\draw (1.62,-0.8) node {$2$};
\fill (3.5,0.) circle (2.5pt);
\draw (3.64,0.37) node {$v_2$};
\draw (2.8,0.25) node {$1$};
\fill (5.,0.) circle (2.5pt);
\draw (5.14,0.37) node {$v_3$};
\draw (4.3,0.25) node {$2$};
\fill (6.48,0.) circle (2.5pt);
\draw (6.62,0.37) node {$v_6$};
\draw (5.8,0.25) node {$1$};
\end{scriptsize}
\end{tikzpicture}
\caption{$T_1$ (Left) and $T_2$ (Right)}
\end{figure}
Observe that $T_1 \in \mathbb{T}$ and $T_2 \notin \mathbb{T}$. $T_1$ has 3 non-pendant vertices, $v_1,v_2$ and $v_3$ and they are each adjacent to at least one pendant vertex,  however, the non-pendant vertex $v_2$ in $T_2$ is not adjacent to any pendant vertex.
\end{Exm}

It is pertinent to point to the fact that the underlying graphs (belonging to $\mathbb{T}$) are a particular case of cluster networks obtained by taking an arbitrary tree as the base and stars as satellites, \cite{ca,lyz}. In \cite{lyz}, the Kirchhoff index formulae for
a cluster of two graphs, are presented, in terms of the pieces. The Kirchhoff index formulae and the effective resistances of generalized cluster networks are obtained, in terms of the pieces, in \cite{ca}. Cluster networks are highly relevant in chemistry applications since all composite molecules consisting of some amalgamation over a central submolecule can be understood as a generalized cluster network. For instance, they can be used to understand some issues
on metal-metal interaction in some molecules (see \cite{toma}), since a cluster network structure can be easily found.

In \cite{godsil}, Godsil has shown that if $A$ is the adjacency matrix of an undirected unweighted bipartite graph $G$ with a unique perfect matching $M$ and $G\backslash M$ is again a bipartite graph, then $A^{-1}$ is {\it diagonally similar} to a non-negative matrix. In particular, for an invertible (unweighted) tree, the inverse (of the adjacency matrix) is diagonally similar to a $\{0,1\}$-matrix. Also, he raised the problem of characterizing all bipartite graphs with a unique perfect matching such that the inverse of the adjacency matrix of each such graph is diagonally similar to a non-negative matrix. Later on in \cite{pp}, Panda and Pati generalized Godsil's result for the case of weighted graphs. By assigning a weight of $1$ to each matching edge and assuming arbitrary positive weights on non-matching edges, they showed that $A^{-1}$ is signature similar to a non-negative matrix. The problem raised by Godsil, in its full generality, was recently solved in \cite[Theorem 1.3]{yd}. A similar question for the class of singular bipartite graphs has an affirmative answer for the new class of trees introduced here with a positive weight on each edge.

\begin{rem}
Note that weighted corona trees belong to $\mathbb{T}$. Consider a tree $T\in \mathbb{T}$ which is not a corona tree. Then there are at least two pendant vertices having a common neighbour in $T$. This implies that at least two rows in the adjacency matrix of $T$ are multiples of each other. Hence $T$ is singular. 
\end{rem}

\begin{thm}\label{pendantedge}
Let $T \in \mathbb{T}$. Then no non-pendant edge can belong to a maximum matching of $T$.
\end{thm}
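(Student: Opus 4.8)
The plan is to argue by contradiction using a local swap (augmenting) argument that exploits the defining property (ii) of $\mathbb{T}$, namely that every non-pendant vertex is adjacent to at least one pendant vertex. Suppose, to the contrary, that some non-pendant edge $e = v_iv_j$ (so that both $v_i$ and $v_j$ are non-pendant) lies in a maximum matching $M$ of $T$. I would then exhibit a matching strictly larger than $M$, contradicting maximality.

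First, since $v_i$ and $v_j$ are both non-pendant, property (ii) supplies a pendant neighbour $v_p$ of $v_i$ and a pendant neighbour $v_q$ of $v_j$. The key elementary observations are that $v_p \neq v_q$ (a pendant vertex has degree one, hence cannot be adjacent to the two distinct vertices $v_i$ and $v_j$) and that both $v_p$ and $v_q$ are unmatched by $M$: the only edge incident to $v_p$ is $v_iv_p$, which cannot lie in $M$ since $v_i$ is already matched to $v_j$ through $e$, and likewise for $v_q$.

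With these in hand, I would set $M' := (M \backslash \{v_iv_j\}) \cup \{v_iv_p, v_jv_q\}$. Removing $e$ frees $v_i$ and $v_j$, and because $v_p$ and $v_q$ were unmatched and distinct, the two new edges $v_iv_p$ and $v_jv_q$ share no vertex with each other or with any edge of $M \backslash \{e\}$; thus $M'$ is again a matching. Since $|M'| = |M| + 1$, this contradicts the maximality of $M$, and the theorem follows.

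The argument is elementary and I anticipate no serious obstacle; the only step demanding care is confirming that $M'$ is genuinely a matching, which amounts precisely to the distinctness of $v_p, v_q$ and to their being $M$-unmatched. Both facts are immediate consequences of pendant vertices having degree one together with the hypothesis $e \in M$, so the membership of $T$ in $\mathbb{T}$ does all the work through property (ii).
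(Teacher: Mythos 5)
Your proof is correct, and it takes a genuinely different and in fact more direct route than the paper. The paper proves the theorem by induction on the number of vertices: it fixes a non-pendant vertex $v_i$ adjacent to $r$ pendant vertices, splits into cases according to whether the offending matched non-pendant edge $e$ is incident to $v_i$, and in each case deletes a suitable pendant vertex to pass to a smaller tree in $\mathbb{T}$ where the induction hypothesis applies. Your argument bypasses the induction entirely: you observe that if $e=v_iv_j\in M$ with both endpoints non-pendant, then property (ii) of $\mathbb{T}$ hands you pendant neighbours $v_p$ of $v_i$ and $v_q$ of $v_j$, both necessarily $M$-unmatched and distinct, so that $v_pv_iv_jv_q$ is an $M$-augmenting path and the swap $M'=(M\setminus\{e\})\cup\{v_iv_p,v_jv_q\}$ is a strictly larger matching. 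The bookkeeping you flag (distinctness of $v_p,v_q$ and their being unmatched, plus $v_p\neq v_j$ and $v_q\neq v_i$, which hold since a pendant vertex cannot equal a non-pendant one) is all in order. What your approach buys is brevity and transparency: the hypothesis of the theorem is used exactly once, at exactly the point where it matters, and there is no need to verify that vertex-deleted subtrees remain in $\mathbb{T}$. What the paper's induction buys is arguably nothing extra here; if anything, your single augmentation step is the cleaner proof.
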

\begin{proof}
If $T$ is a corona tree then it has a perfect matching and each matching edge is a pendant edge. Now, we consider the case where $T$ is not a corona tree. In that case, there is at least one non-pendant vertex which is adjacent to at least two pendant vertices in $T$. We prove the assertion by induction on the number of vertices in $T$.

The smallest tree in $\mathbb{T}$ is $K_{1,2}$ and every maximum matching has only pendant edges. Let $T \in \mathbb{T}$ have  $n$ vertices. Let the statement be true for any tree that belongs to $\mathbb{T}$ and whose the number of vertices is less than $n$. Let $v_i$ be a non-pendant vertex adjacent to $r$ pendant vertices $\{v_{i_1},v_{i_2},\ldots ,v_{i_r}\}$ in $T$. Let $e$ be an arbitrary non-pendant edge contained in a maximum matching $M$ in $T$. We show that this leads to a contradiction.

{\bf Case (i):} $e$ is incident to $v_i$. Then none of the edges $v_iv_{i_p}$, $1 \leq p\leq r$,  belongs to $M$. So, $M$ will also be a maximum matching of the tree $T\backslash \{v_{i_1}\}\in \mathbb{T}$. This, however contradicts the induction hypothesis that a non-pendant edge belongs to a maximum matching in the tree $T\backslash \{v_{i_1}\}$ (with $n-1$ vertices).

{\bf Case (ii):} $e$ is not incident with vertex $v_i$. Then one of $v_iv_{i_p}, 1 \leq p \leq r$ belongs to $M$; otherwise $M$ will not be a maximum matching. Let $v_iv_{i_p}$ belong to $M$ for some $p, 1 \leq p\leq r$. Then $M$ will also be a maximum matching for the tree $T\backslash \{v_{i_q}\}$ for some $q\neq p$, which again contradicts the induction hypothesis.

This completes the proof.
\end{proof}

We have the following immediate consequence, which will prove to be quite useful in further considerations.

\begin{cor}\label{length}
Let $T\in \mathbb{T}$. Then the length of any alternating path is at most three.
\end{cor}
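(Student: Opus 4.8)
The plan is to leverage Theorem \ref{pendantedge} directly: since no non-pendant edge of $T$ can lie in a maximum matching, every matching edge must be pendant, i.e.\ must have an endpoint of degree one in $T$. I would then argue that an alternating path that is too long is forced to contain a matching edge both of whose endpoints are \emph{internal} to the path, and that such an edge cannot be pendant. First I would recall that, by the definition of an alternating path, its first and last edges lie in the maximum matching $M$ and its edges alternate between $M$ and $M^c$; consequently an alternating path always has an odd number of edges, say $2k+1$. For $k=0$ and $k=1$ the length is $1$ and $3$ respectively, so it suffices to rule out $k \ge 2$, that is, alternating paths of length at least five.

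Next, suppose for contradiction that $P\colon v_0 v_1 \cdots v_{2k+1}$ is an alternating path of length $2k+1 \ge 5$ with respect to a maximum matching $M$, so that the edges $v_0v_1, v_2v_3, \ldots, v_{2k}v_{2k+1}$ all belong to $M$. Because $k \ge 2$, at least one of these matching edges, for instance $v_2v_3$, has both endpoints as interior vertices of $P$ (indeed $v_3$ is interior precisely because $3 \le 2k$). Every interior vertex of a path is adjacent within the path to two distinct vertices, so $v_2$ and $v_3$ each have degree at least two in $T$; hence $v_2v_3$ is a non-pendant edge of $T$ lying in the maximum matching $M$. This contradicts Theorem \ref{pendantedge}, and therefore no alternating path of length $\ge 5$ can exist. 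Combined with the parity observation that every alternating path has odd length, we conclude that every alternating path in $T$ has length $1$ or $3$, which is the claim.

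There is no serious obstacle here: the corollary is essentially an immediate structural consequence of Theorem \ref{pendantedge}. The only point requiring a moment's care is the verification that an interior matching edge of a long alternating path is genuinely non-pendant, that is, that both of its endpoints have degree at least two in $T$; but this follows at once because each such endpoint is an interior vertex of the path and therefore already has two neighbours inside $T$.
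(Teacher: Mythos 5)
Your proof is correct and follows essentially the same route as the paper: assume an alternating path of length at least five and derive a contradiction with Theorem \ref{pendantedge} by exhibiting a non-pendant matching edge on the path. You simply make explicit the two details the paper leaves implicit (the odd-length parity of alternating paths and the fact that an interior matching edge has both endpoints of degree at least two, hence is non-pendant), which is a faithful filling-in rather than a different argument.
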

\begin{proof}
Suppose $T$ has an alternating path $P$ of length at least five. Then $P$ should have at least one non-pendant edge, belonging to a maximum matching. This is a contradiction.
\end{proof}

\begin{rem}
Let $T\in \mathbb{T}$ have $k$ non-pendant vertices $\{v_1,v_2,\ldots ,v_k\}$ where each $v_i$ is adjacent to $t_i$ pendant vertices. Then any maximum matching of $T$ always contains a set of $k$ non-adjacent pendant edges. So, the number of edges in a maximum matching will be $k$ and the number of maximum matchings will be $t_1t_2\ldots t_k$ in $T$.
\end{rem}

\begin{rem}\label{lengthproperty}
Let $T\in \mathbb{T}$. Then both the initial and the terminal vertices of a length three alternating path in $T$ should be pendant vertices; a length one alternating path is nothing but a pendant edge.
\end{rem}

\begin{cor}\label{dc}
Let $T \in \mathbb{T}$. We then have the following:\\
(a) Let $v_1,v_2, . . . , v_k$ denote the non-pendant vertices where each $v_i$ is adjacent to $t_i$ pendant vertices. Then in $T^{\#}$, one has the formula $deg(v_i)=t_i$.\\
(b) Let $T$ be a tree other than the star and the corona tree. Then $T^{\#}$ has at least one 4-cycle.
\end{cor}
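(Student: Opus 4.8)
My plan is to treat the two parts separately, relying throughout on Remark \ref{consequencepi_2}, which converts degrees in $T^{\#}$ into counts of alternating paths, together with the structural restrictions on alternating paths in $\mathbb{T}$ established above.

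For part (a), I would count the alternating paths of $T$ that start at a fixed non-pendant vertex $v_i$. By Corollary \ref{length} any alternating path has length at most three, and since an alternating path must begin and end with a matching edge, its length is odd; hence each such path has length one or three. A length three alternating path has both endpoints pendant by Remark \ref{lengthproperty}, so none can start at the non-pendant vertex $v_i$. The length one alternating paths starting at $v_i$ are exactly the edges incident to $v_i$ that lie in some maximum matching; by Theorem \ref{pendantedge} these are pendant edges, and as $v_i$ is non-pendant their other endpoints are precisely the $t_i$ pendant neighbours of $v_i$. Conversely, Lemma \ref{pebelongstomm} guarantees that each of these $t_i$ pendant edges does lie in a maximum matching, so each is a genuine length one alternating path. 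Since distinct alternating paths from $v_i$ end at distinct vertices (paths in a tree being unique), this yields exactly $t_i$ neighbours of $v_i$ in $T^{\#}$, that is, $deg(v_i)=t_i$.

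For part (b), the idea is to exhibit one explicit length three alternating path and then perturb its initial matching edge to close a $4$-cycle, exactly as in Case (i) of Lemma \ref{nottree}. Because $T$ is not a corona tree, some non-pendant vertex $v_a$ is adjacent to at least two pendant vertices $x,y$; because $T$ is not a star, $v_a$ cannot have all its neighbours pendant (otherwise $\{v_a\}\cup N(v_a)$ would be a component, forcing $T$ to be a star), so $v_a$ has a non-pendant neighbour $v_b$, and membership in $\mathbb{T}$ gives $v_b$ a pendant neighbour $z$. I would then select a maximum matching $M$ containing both pendant edges $xv_a$ and $zv_b$; the path $x\,v_a\,v_b\,z$ is alternating with respect to $M$, because its middle edge $v_av_b$ is non-pendant and hence lies in no maximum matching by Theorem \ref{pendantedge}. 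Replacing $xv_a$ by $yv_a$ gives another maximum matching making $y\,v_a\,v_b\,z$ alternating. By Remark \ref{consequencepi}, the edges $v_ax$, $xz$, $zy$, $yv_a$ all occur in $T^{\#}$, so $(v_a,x,z,y,v_a)$ is a $4$-cycle.

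The routine points to verify are that $M$ can be chosen to contain both prescribed pendant edges at once---which follows from the description of maximum matchings in $\mathbb{T}$ as one pendant edge selected per non-pendant vertex---and that the four vertices $v_a,x,y,z$ are genuinely distinct (here $z=x$ or $z=y$ would force a pendant vertex to have degree two). The main obstacle is really the existence argument for $v_b$ and $z$: one must check that the hypotheses ``not a star'' and ``not a corona tree,'' combined with the defining property of $\mathbb{T}$, together guarantee a length three alternating path emanating from a vertex that already carries two pendant edges. Once this single path is in hand, the pendant-edge swap and the cycle construction are immediate.
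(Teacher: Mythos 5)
Your proposal is correct and follows essentially the same route as the paper: part (a) is the same count of alternating paths emanating from $v_i$ via Theorem \ref{pendantedge} and Remark \ref{consequencepi_2}, and part (b) is the same pendant-edge-swap construction of a $4$-cycle that the paper obtains by citing Case (i) of the proof of Lemma \ref{nottree}. Your version of (b) is merely more self-contained, since you use the structure of $\mathbb{T}$ (every non-pendant vertex carries a pendant edge, and non-pendant edges lie in no maximum matching) to exhibit the length-three alternating path explicitly rather than by reference.
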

\begin{proof}
$(a)$ By Theorem \ref{pendantedge}, non-pendant edges do not belong to any maximum matching in $T$. This means that the $t_i$ pendant edges are the only alternating paths starting from $v_i$ in $T$. By Remark \ref{consequencepi_2}, the number of edges incident to $v_i$ in $T^{\#}$ is $t_i$. Therefore, $deg(v_i)$ in $T^{\#}$ is $t_i$.

$(b)$ Since $T$ is not a corona tree and $T \in \mathbb{T}$, there exist two adjacent pendant edges in $T$. Now, from the proof of Lemma \ref{nottree}, it is clear that $T^{\#}$ has at least one 4-cycle.
\end{proof}

In what follows, we extend the result of \cite{godsil} for invertible trees, mentioned earlier, to the class $\mathbb{T}$. Here, one thing we need to mention is that if we consider the base tree of $T$ is nonsingular and attach the same number of pendant vertex to each non-pendant vertex in the following Theorem, then the signability of $A^{\#}$ can be derived from \cite[Proposition 6]{pav}. The authors of \cite{pav} proved their result linear algebraically while we give a simple graph-theoretic proof.

\begin{thm}\label{signaturesimilar}
Let $T\in \mathbb{T}$ be assigned positive weights on each edge. Let $A$ be the adjacency matrix of $T$. Then there exists a signature matrix $S$ such that $SA^{\#}S$ is a non-negative matrix.
\end{thm}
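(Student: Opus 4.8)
The plan is to exhibit the signature matrix $S$ explicitly from the combinatorial structure of $T$, and then verify the sign of each entry $(SA^{\#}S)_{ij}$ using the formula from Theorem \ref{pi} together with the length restriction from Corollary \ref{length}. The key observation is that, by Corollary \ref{length}, every alternating path in $T$ has length one or three, so by Remark \ref{consequencepi} every nonzero off-diagonal entry $w^{\#}(v_iv_j) = \mu_T(v_i,v_j)/m(T)$ of $A^{\#}$ comes from a length-one or length-three alternating path. Since $m(T) > 0$ and each $(\alpha_{\overline{v_i,v_j}}(M))^2 \ge 0$, the sign of $w^{\#}(v_iv_j)$ is governed entirely by $\alpha(v_i,v_j)$, which carries a $+1$ when the distance is $\equiv 1 \pmod 4$ and a $-1$ when it is $\equiv -1 \pmod 4$. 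Hence a length-one path (distance $1$) always contributes a positive entry, while a length-three path (distance $3 \equiv -1 \pmod 4$) contributes an entry whose sign is the sign of $-w(e_1)w(e_2)w(e_3)$; with positive weights this is strictly negative.

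The next step is to define $S$ so as to flip exactly the negative entries to positive. I would two-colour the vertices as follows: fix any pendant vertex as a root and assign $S_{ii} = +1$ to it, then propagate a $\pm 1$ labelling so that the two endpoints of every length-three alternating path receive opposite signs while the two endpoints of every pendant edge receive the same sign. Because in a tree in $\mathbb{T}$ each length-three alternating path runs pendant--nonpendant--nonpendant--pendant (Remark \ref{lengthproperty}), the natural candidate is to set $S_{ii}$ according to the parity of the distance from the root measured only along the unweighted tree, or more cleanly to assign signs by a bipartition-type rule on the pendant vertices; I would verify that this rule is globally consistent (no conflicting constraints) precisely because $T$ is a tree and hence has no cycles to obstruct a consistent $\pm 1$ assignment.

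With $S$ fixed, the verification is mechanical: for $i \ne j$ one has $(SA^{\#}S)_{ij} = S_{ii}\,w^{\#}(v_iv_j)\,S_{jj}$, and the two cases (pendant edge, same sign on endpoints, positive entry preserved; length-three path, opposite signs on endpoints, negative entry flipped to positive) both yield a non-negative result. The diagonal entries of $A^{\#}$ are unaffected by conjugation by $S$, and I would argue they are non-negative separately, either because $A^{\#}$ is a symmetric matrix whose diagonal entries equal $\mu_T(v_i,v_i)/m(T)$ with $\mu_T(v_i,v_i) \ge 0$ by the convention following Theorem \ref{pi}, or more directly from the spectral representation of the group inverse of a symmetric matrix.

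\textbf{The main obstacle} I anticipate is proving that the sign assignment is globally consistent, i.e.\ that the local constraints ``endpoints of a pendant edge agree'' and ``endpoints of a length-three alternating path disagree'' can be simultaneously satisfied by a single diagonal $\pm 1$ matrix. Since $T^{\#}$ generically contains $4$-cycles (Corollary \ref{dc}(b)), one cannot simply read the signs off $T^{\#}$ as a bipartition; instead the consistency must be traced back through the acyclic structure of $T$ itself. The cleanest route will likely be to define $S_{ii}$ as a function computed along paths in $T$ (for instance via $(-1)$ raised to the number of non-pendant edges on the path from the root to $v_i$, or an analogous tree-distance parity) and then check that this closed-form rule satisfies both local constraints for every pendant edge and every length-three path, using Remark \ref{lengthproperty} to control the endpoints. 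Once the formula for $S$ is pinned down, the remaining sign bookkeeping is routine.
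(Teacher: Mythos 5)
Your proposal is correct and follows essentially the same route as the paper: the paper also reduces the sign of each nonzero entry to $(-1)^{(d(v_i,v_j)-1)/2}$ with $d(v_i,v_j)\in\{1,3\}$ via Corollary \ref{length}, and defines $S$ exactly by your suggested closed-form rule, $s_i=(-1)^{n_i}$ where $n_i$ counts the edges on the path from a fixed pendant root to $v_i$ that lie in no maximum matching (equivalently, by Theorem \ref{pendantedge}, the non-pendant edges). The consistency issue you flag then dissolves into the two direct checks $n_i=n_j$ for a pendant edge and $n_j=n_i\pm1$ for a length-three alternating path, and the diagonal is zero by the convention $\mu_T(v_i,v_i)=0$.
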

\begin{proof}
Without loss of generality, let $v_1$ be a pendant vertex in $T$. For $i=1,2,\ldots ,n$, let $P(v_1,v_i)$ be the path from $v_1$ to $v_i$ and $n_i$ be the number of edges on the path $P(v_1,v_i)$ which does not belong to any maximum matching in $T$. Set $n_1=0,~s_i=(-1)^{n_i}$, $i=1,2,\ldots ,n$ and let $S=$ diag$(s_1,s_2,\ldots ,s_n)$. Let $A^{\#}=(\beta _{ij})$, then $ij$-th element of $SA^{\#}S$ is $s_is_j\beta _{ij}$, which equals to zero if and only if $\beta _{ij}=0$. Suppose $\beta _{ij}\neq 0$, then by Theorem \ref{pi}, the path between $v_i$ and $v_j$ is alternating with respect to some maximum matching. Since the edge weights are positive, the sign of $\beta _{ij}$ will depend on the sign of $(-1)^{\frac{d(v_i,v_j)-1}{2}}$. By Corollary \ref{length}, $d(v_i,v_j)$ is either one or three. If $d(v_i,v_j)=1$, then $v_iv_j$ is a pendant edge and so, $n_i = n_j$ . Now, $s_is_j=(-1)^{2n_i}=1$ and $(-1)^{\frac{d(v_i,v_j)-1}{2}}=1$. Thus, the
$ij$-th element of $SA^{\#}S$ is positive. On the other hand, when $d(v_i,v_j)=3$, either $n_j = n_i + 1$ or $n_j = n_i-1$. In both these situations $s_is_j=-1$ and $(-1)^{\frac{d(v_i,v_j)-1}{2}}=-1$. Thus, the $ij$-th element of $SA^{\#}S$ is positive, in this case, too.
\end{proof}
\begin{rem}
We show by means of an example that the result above does not hold for singular weighted trees which do not belong to $\mathbb{T}$. 
\begin{figure}[H]
\begin{tikzpicture}[scale=1]
\draw  (1.,0.)-- (3.,0.);
\draw  (3.,0.)-- (5.,0.);
\draw  (5.,0.)-- (7.,0.);
\draw  (7.,0.)-- (9.,1.);
\draw  (7.,0.)-- (9.,-1.);
\begin{scriptsize}
\fill (1.,0.) circle (2.5pt);
\draw (1.,0.37) node {$v_1$};
\fill (3.,0.) circle (2.5pt);
\draw (3.,0.37) node {$v_2$};
\fill (5.,0.) circle (2.5pt);
\draw (5.,0.37) node {$v_3$};
\fill (7.,0.) circle (2.5pt);
\draw (7.,0.37) node {$v_4$};
\fill (9.,1.) circle (2.5pt);
\draw (9.35,1.05) node {$v_5$};
\fill (9.,-1.) circle (2.5pt);
\draw (9.35,-1.) node {$v_6$};
\draw (2.06,0.25) node {$1$};
\draw (4.06,0.25) node {$1$};
\draw (6.06,0.25) node {$1$};
\draw (8.1,0.95) node {$1$};
\draw (8.2,-0.9) node {$1$};
\end{scriptsize}
\end{tikzpicture}
\caption{$(T,w)$}
\label{notgodsil}
\end{figure}
The adjacency matrix of $T$ is 
$$A=\begin{pmatrix}
0 & 1 & 0 & 0 & 0 & 0 \\
1 & 0 & 1 & 0 & 0 & 0 \\
0 & 1 & 0 & 1 & 0 & 0 \\
0 & 0 & 1 & 0 & 1 & 1 \\
0 & 0 & 0 & 1 & 0 & 0 \\
0 & 0 & 0 & 1 & 0 & 0 \\
\end{pmatrix}$$ and 
$$A^{\#}=\begin{pmatrix}
~0 & ~\frac{3}{5} & 0 & -\frac{1}{5} & ~0 & ~0 \\
~\frac{3}{5} & ~0 & \frac{2}{5} & ~0 & -\frac{1}{5} & -\frac{1}{5} \\
~0 & ~\frac{2}{5} & 0 & ~\frac{1}{5} & ~0 & ~0 \\
-\frac{1}{5} & ~0 & \frac{1}{5} & ~0 & ~\frac{2}{5} & ~\frac{2}{5} \\
~0 & -\frac{1}{5} & 0 & ~\frac{2}{5} & ~0 & ~0 \\
~0 & -\frac{1}{5} & 0 & ~\frac{2}{5} & ~0 & ~0 \\
\end{pmatrix}.$$ 
Suppose that there exist a signature matrix $S=(s_{ij})$ such that $SA^{\#}S$ is non-negative. Let $A^{\#}=(\beta _{ij})$ and $B=SA^{\#}S=(b_{ij})$. Then $b_{ij}=s_is_j\beta _{ij}$, so that $b_{12}=\frac{3}{5}s_1s_2$, $b_{14}=-\frac{1}{5}s_1s_4$, $b_{23}=\frac{2}{5}s_2s_3$ and $b_{34}=\frac{2}{5}s_3s_4$. Since $B$ is a non-negative matrix, the numbers $s_1s_2$, $s_1s_4$, $s_2s_3$ and $s_3s_4$ are positive, negative, positive and positive, respectively. Since $s_1s_2$ is positive and $s_1s_4$ is negative, we infer that $\frac{s_2}{s_4}$ is negative. Further, since $s_2s_3$ and $s_3s_4$ positive, one obtains that $\frac{s_2}{s_4}$ is positive, a contradiction.
\end{rem}

\begin{rem}\label{irreducible}
Let us recall that $A$ is said to be an irreducible matrix if digraph $D(A)$ corresponding to matrix $A$ is strongly connected. Let $A$ be an $n\times n$ real matrix such that $A^{\#}$ exists. Then by \cite[Lemma 2.4]{kls}, $A$ is irreducible if, and only if, $A^{\#}$ is irreducible .
\end{rem}

In \cite[Corollary 2.8]{sms}, it was shown that for a non-singular unweighted tree $T$, the smallest positive eigenvalue of $T$ is simple and there exists an eigenvector corresponding to the smallest eigenvalue such that all its entries are non-zero. We prove a similar result for the new class of trees $\mathbb{T}$, with an arbitrary assignment of positive weights on each edge.

The spectral radius, denoted by $\rho (G)$, of $G$ is the spectral radius of its adjacency matrix $A$.

\begin{cor}
Let $T\in \mathbb{T}$ with a positive weight on each edge with the adjacency matrix $A$. Then the smallest positive eigenvalue of $T$, $\tau (T)$ is simple. Furthermore, there exists a corresponding eigenvector each of whose entries is nonzero. 
\end{cor}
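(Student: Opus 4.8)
The plan is to connect the smallest positive eigenvalue $\tau(T)$ of $A$ to the spectral radius of the group inverse $A^{\#}$, and then invoke a Perron--Frobenius type argument on a suitably transformed matrix. The key observation is that the nonzero eigenvalues of $A^{\#}$ are precisely the reciprocals of the nonzero eigenvalues of $A$, so that $\tau(T) = 1/\rho(A^{\#})$, where $\rho(A^{\#})$ is the spectral radius of $A^{\#}$ restricted to its range. Thus the simplicity of $\tau(T)$ will follow once I establish that $\rho(A^{\#})$ is a simple eigenvalue of $A^{\#}$, and the existence of a nowhere-zero eigenvector for $\tau(T)$ will follow from the existence of a positive eigenvector for $\rho(A^{\#})$ after undoing the signature transformation.

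The main tool is Theorem \ref{signaturesimilar}: since $T \in \mathbb{T}$ carries positive weights, there is a signature matrix $S$ with $B := SA^{\#}S$ non-negative. Because $S^2 = I$, the matrices $A^{\#}$ and $B$ are similar, hence share the same spectrum, and $\rho(B) = \rho(A^{\#})$. First I would argue that $B$ is irreducible: by Remark \ref{irreducible}, $A^{\#}$ is irreducible if and only if $A$ is, and $A$ is irreducible because $T$ is connected (the digraph of a connected undirected graph is strongly connected); signature similarity preserves irreducibility since it only rescales entries by $\pm 1$ without changing the zero--nonzero pattern. Therefore $B$ is an irreducible non-negative matrix, and the Perron--Frobenius theorem applies: $\rho(B)$ is a simple eigenvalue of $B$ admitting a strictly positive eigenvector $y > 0$.

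From here the conclusion is essentially bookkeeping. Simplicity of $\rho(B)$ as an eigenvalue of $B$ transfers through the similarity to give simplicity of $\rho(A^{\#})$ as an eigenvalue of $A^{\#}$, and via the reciprocal relationship this forces $\tau(T) = 1/\rho(A^{\#})$ to be a simple eigenvalue of $A$. For the eigenvector, if $By = \rho(B) y$ with $y > 0$, then $x := Sy$ satisfies $A^{\#} x = \rho(A^{\#}) x$, and since $S$ is a $\pm 1$ diagonal matrix, every entry of $x = Sy$ is nonzero. Finally I would transfer $x$ back to an eigenvector of $A$ for $\tau(T)$: since $A$ and $A^{\#}$ commute and share the relevant eigenspace (the eigenspace of $A^{\#}$ for $\rho(A^{\#})$ is the eigenspace of $A$ for $\tau(T)$), the same vector $x$ is an eigenvector of $A$ for $\tau(T)$ with all entries nonzero.

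The main obstacle I anticipate is the careful justification that the Perron eigenvalue $\rho(A^{\#})$ of the \emph{range-restricted} action of $A^{\#}$ corresponds exactly to the smallest \emph{positive} eigenvalue of $A$ rather than merely the largest eigenvalue in modulus. Because $A$ is indefinite (it is the adjacency matrix of a bipartite graph, so its spectrum is symmetric about zero), one must verify that $\rho(A^{\#})$ is genuinely a positive eigenvalue and that its reciprocal is the least positive eigenvalue of $A$, as opposed to the negative eigenvalue of largest modulus. The bipartite symmetry of the spectrum (Proposition \ref{connected}(b) guarantees $A^{\#}$ is bipartite) shows the spectrum of $A^{\#}$ is also symmetric about zero, so $\rho(A^{\#})$ and $-\rho(A^{\#})$ are both eigenvalues of equal magnitude; the Perron--Frobenius conclusion of a \emph{positive} eigenvector singles out $+\rho(A^{\#})$ as the one associated to the positive eigenvalue, and its reciprocal is then the smallest positive eigenvalue of $A$. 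Pinning down this sign correspondence cleanly is the step requiring the most care.
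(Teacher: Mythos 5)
Your proposal is correct and follows essentially the same route as the paper: apply Theorem \ref{signaturesimilar} to get a non-negative $SA^{\#}S$, use Remark \ref{irreducible} and Perron--Frobenius to obtain a simple spectral radius with a positive eigenvector, and push $Sx$ back to an eigenvector of $A$ for $\tau(T)=1/\rho(A^{\#})$ with no zero entries. The paper handles your final worry (that the Perron root of $A^{\#}$ really yields the smallest \emph{positive} eigenvalue of $A$) exactly as you suggest, by the explicit computation $Sx=AA^{\#}(Sx)=\rho(T^{\#})A(Sx)$, which exhibits $Sx$ as an eigenvector of $A$ for the positive value $1/\rho(T^{\#})$.
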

\begin{proof}
By Theorem \ref{signaturesimilar}, there exists a signature matrix $S$ such that $SA^{\#}S$ is a non-negative matrix. Since $A$ is irreducible, by Remark \ref{irreducible}, $A^{\#}$ is irreducible and so $SA^{\#}S$ is irreducible. Now, by the Perron Frobenius theorem, $\rho:=\rho(SA^{\#}S)$ is simple and since $SA^{\#}S$ is similar to $A^{\#}$, $\rho(T^{\#})$ is simple. So, $\tau(T)=\frac{1}{\rho(T^{\#})}$ is simple.

Let $x > 0$ be the Perron vector corresponding to $\rho (SA^{\#}S)$ so that $SA^{\#}S x=\rho (SA^{\#}S) x$. Since $\rho (SA^{\#}S)=\rho(A^{\#})$ and $S=S^{-1}$, $A^{\#}(Sx)=\rho (T^{\#})(Sx)$. So, $Sx\in R(A^{\#})=R(A)$ and so, $Sx=AA^{\#}(Sx)=\rho (T^{\#})A(Sx)$. This means that $\tau (T)(Sx)=\frac{1}{\rho (T^{\#})}(Sx)=A(Sx),$ showing that $Sx$ is an eigenvector corresponding to $\tau (T)$. Since $x>0$ and $S$ is a signature matrix, all the entries of $Sx$ are non-zero.
\end{proof}
We conclude this section by giving information on trees in $\mathbb{T}$, that are caterpillars. A caterpillar tree is a tree for which removing the leaves and incident edges produces a path graph.

\begin{prop}
Let $T \in \mathbb{T}$ be a weighted caterpillar other than a star with $n$ pendant vertices and $k$ non-pendant vertices $v_1,v_2,\ldots ,v_k$ such that $v_i\sim v_{i+1}$, for $i=1,2,\ldots , k-1$. Let $t_i$ be the number of pendant vertices adjacent to $v_i$ for each $i$. Then the number of edges in $T^{\#}$ is $n+t_1t_2+t_2t_3+\cdots +t_{k-1}t_k.$
\end{prop}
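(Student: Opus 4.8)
The plan is to count the edges of $T^{\#}$ directly through the alternating-path description in Remark~\ref{consequencepi}: an unordered pair of vertices spans an edge of $T^{\#}$ precisely when it is joined by an alternating path in $T$. Since an alternating path has its first and last edges in the matching, it has an odd number of edges; and by Corollary~\ref{length} every alternating path in a tree of $\mathbb{T}$ has length at most three. Hence every edge of $T^{\#}$ arises from an alternating path of length exactly one or exactly three, and I would organize the count according to these two types, checking that the two families are disjoint and that no path is counted twice.

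First I would dispose of the length-one paths. By Remark~\ref{lengthproperty} a length-one alternating path is exactly a pendant edge of $T$, and since $T$ is a caterpillar different from a star, each of its $n$ pendant vertices is joined by a single edge to a spine (non-pendant) vertex; thus there are exactly $n$ pendant edges, contributing $n$ edges to $T^{\#}$. This is also the content of Corollary~\ref{dc}(a): in $T^{\#}$ the vertex $v_i$ is adjacent only to its $t_i$ pendant neighbours, so the edges of $T^{\#}$ meeting a non-pendant vertex are exactly these pendant edges, and there are $\sum_{i=1}^{k} t_i = n$ of them (each pendant vertex being attached to exactly one non-pendant vertex). In particular no edge of $T^{\#}$ joins two non-pendant vertices, so every remaining edge of $T^{\#}$ joins two pendant vertices.

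Next I would analyze the length-three paths, which by the previous paragraph account for all remaining edges of $T^{\#}$. Let $abcd$ be such a path (vertices in this order), alternating with respect to some maximum matching $M$, with $ab,cd\in M$ and $bc\notin M$. By Theorem~\ref{pendantedge} the matching edges $ab$ and $cd$ are pendant edges, and by Remark~\ref{lengthproperty} the endpoints $a,d$ are the pendant vertices; hence $b,c$ are the non-pendant neighbours of $a,d$, and the middle edge $bc$ joins two spine vertices, forcing $\{b,c\}=\{v_i,v_{i+1}\}$ for some $i$. Conversely, for every $i$ and every choice of a pendant neighbour $a$ of $v_i$ and a pendant neighbour $d$ of $v_{i+1}$, the path $av_iv_{i+1}d$ is a genuine length-three alternating path: using the structural description of maximum matchings of trees in $\mathbb{T}$ recorded earlier (each non-pendant vertex is matched to one of its pendant neighbours, the choices being independent), I can take $M$ matching $v_i$ to $a$ and $v_{i+1}$ to $d$, while $v_iv_{i+1}$ is automatically unmatched by Theorem~\ref{pendantedge}. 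Thus the length-three edges of $T^{\#}$ correspond exactly to the pairs $\{a,d\}$ just described.

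Finally I would count these pairs. For a fixed $i$ there are $t_i$ choices for $a$ and $t_{i+1}$ for $d$, giving $t_it_{i+1}$ pairs, and distinct values of $i$ give disjoint families, because the unordered pair $\{a,d\}$ determines the pair of attachment vertices $\{v_i,v_{i+1}\}$, and consecutive pairs along the spine path are distinct. Summing over $i=1,\dots,k-1$ yields $t_1t_2+\cdots+t_{k-1}t_k$ edges between pendant vertices, and adding the $n$ pendant edges gives the claimed total $n+t_1t_2+\cdots+t_{k-1}t_k$. The only genuinely delicate step is the converse realizability in the third paragraph---verifying that every candidate pair $\{a,d\}$ actually arises from some maximum matching---which is exactly where the independence of the matching choices in $\mathbb{T}$ is essential; the rest is bookkeeping to ensure completeness and the absence of double counting.
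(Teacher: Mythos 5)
Your proposal is correct and follows essentially the same route as the paper: count alternating paths via Remark \ref{consequencepi}, use Corollary \ref{length} to restrict to lengths one and three, get $n$ from the pendant edges and $t_it_{i+1}$ from the length-three paths between consecutive spine vertices. The only difference is that you spell out the converse realizability of each candidate length-three path and the disjointness of the count, which the paper leaves implicit.
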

\begin{proof}
We have assumed that $v_i\sim v_{i+1}$, for $i=1,2,\ldots , (k-1)$. By Remark \ref{consequencepi}, the number of alternating paths in $T$ is the number of edges in $T^{\#}$. Since the maximum matching edges are the only pendant edges, the number of length one alternating path in $T$ is $n$.

For a length three alternating path, the initial edge should be incident to $v_i$ and the terminal edge should be incident to $v_{i+1}$ for some $i\in \{1,2,\ldots , k-1\}$. For any $i$, the total number of 
alternating paths starting from a pendant vertex adjacent to $v_i$ and ending at a pendant vertex adjacent to $v_{i+1}$ in $T$ is $t_it_{i+1}$. So, the total number of length three alternating paths in $T$ is $t_1t_2+t_2t_3+ \cdots +t_{k-1}t_k$. Since the length of any alternating path is at most three in $T$, the total number of edges in $T^{\#}$ is $n+t_1t_2+t_2t_3+ \cdots + t_{k-1}t_k$.
\end{proof}

\vspace{1cm}

\section{Acknowledgement}

The author thanks K.C. Sivakumar of the Indian Institute of Technology Madras for the many discussions on a draft version of this article. These have improved the readability, greatly.

\newpage

\end{document}